\DeclareAcronym{SRN}{
  short=SRN,
  long=Stochastic Reaction Network,
}
\DeclareAcronym{FSP}{
  short=FSP,
  long=Finite State Projection,
}
\DeclareAcronym{MC}{
  short=MC,
  long=Monte Carlo,
}
\DeclareAcronym{SSA}{
  short=SSA,
  long=Stochastic Simulation Algorithm,
}
\DeclareAcronym{TL}{
  short=TL,
  long=Tau Leap,
}
\DeclareAcronym{FFSP}{
  short=FFSP,
  long=Filtered Finite State Projection,
}
\DeclareAcronym{PF}{
  short=PF,
  long=Particle Filter,
}
\DeclareAcronym{CME}{
  short=CME,
  long=Chemical Master Equation,
}
\DeclareAcronym{ODE}{
  short=ODE,
  long=Ordinary Differential Equation,
}
\DeclareAcronym{FMP}{
  short=FMP,
  long=Filtered Markovian Projection,
}
\DeclareAcronym{MP}{
  short=MP,
  long=Markovian Projection,
}
\DeclareAcronym{PMF}{
  short=PMF,
  long=Probability Mass Function,
}
\DeclareAcronym{QOI}{
  short=QoI,
  long=Quantity of Interest,
}
\DeclareAcronym{CMP}{
  short=CMP,
  long=Conditional Markovian Projection,
}
\DeclareAcronym{UMP}{
  short=UMP,
  long=Unconditional Markovian Projection,
}
\title{Filtered Markovian Projection: Dimensionality Reduction in Filtering for Stochastic Reaction Networks}
\author[1]{Chiheb Ben Hammouda}
\author[2]{Maksim Chupin\thanks{\href{mailto:maksim.chupin@kaust.edu.sa}{maksim.chupin@kaust.edu.sa}}}
\author[3]{Sophia M\"{u}nker}
\author[2,3]{Ra\'{u}l Tempone}
\affil[1]{\small{Mathematical Institute, Utrecht University, Utrecht, the Netherlands}}
\affil[2]{\small{King Abdullah University of Science and Technology (KAUST), Computer, Electrical and Mathematical Sciences \& Engineering Division (CEMSE), Thuwal, Saudi Arabia}}
\affil[3]{\small{Chair of Mathematics for Uncertainty Quantification, RWTH Aachen University, Aachen, Germany}}
\begin{document}
    
    \maketitle
    
    \begin{abstract}
        Stochastic reaction networks (SRNs) model stochastic effects for various applications, including intracellular chemical or biological processes and epidemiology. A typical challenge in practical problems modeled by SRNs is that only a few state variables can be dynamically observed. Given the measurement trajectories, one can estimate the conditional probability distribution of unobserved (hidden) state variables by solving a stochastic filtering problem. In this setting, the conditional distribution evolves over time according to an extensive or potentially infinite-dimensional system of coupled ordinary differential equations with jumps, known as the filtering equation. The current numerical filtering techniques, such as the filtered finite state projection \cite{DAmbrosio2022FFSP}, are hindered by the curse of dimensionality, significantly affecting their computational performance. To address these limitations, we propose to use a dimensionality reduction technique based on the Markovian projection (MP), initially introduced for forward problems \cite{Hammouda2023MP}. In this work, we explore how to adapt the existing MP approach to the filtering problem and introduce a novel version of the MP, the Filtered MP, that guarantees the consistency of the resulting estimator. The novel consistent MP filter employs a reduced-variance particle filter for estimating the jump intensities of the projected model and solves the filtering equations in a low-dimensional space. The analysis and empirical results highlight the superior computational efficiency of projection methods compared to the existing filtered finite state projection in the large dimensional setting.
    \end{abstract}
    
    \sloppy
    \textbf{Keywords:} Stochastic reaction network, stochastic filtering, dimensionality reduction, Markovian projection, marginalized filter
    
    \textbf{Mathematics Subject Classification (2020):} 60J22, 60J74, 60J27, 60G35, 92C40
    \fussy
    
    \section{Introduction}
        \label{sec:0_Intro}
        This paper provides a framework for dimensionality reduction in a filtering problem for a special class of continuous-time Markov chains, \acfp{SRN}. This work considers a partially observed \acp{SRN} with a high-dimensional hidden state space and exact (noise-free and continuous in time) observations. The goal is to estimate the conditional distribution of the hidden states for a given observed trajectory. We employ the \acf{MP} technique originally introduced for dimensionality reduction in forward problems \cite{Hammouda2023MP} and extend it to the filtering setup. The aim is to determine an \ac{SRN} of lower dimensionality that preserves the marginal conditional distributions of the original system. Based on the \acf{FMP} theorem, we present a novel filtering algorithm. The novel approach, called \acf{MP} filter, integrates the strengths of the \ac{MP}, \acf{PF} \cite{Rathinam2021PFwithExactState}, and \acf{FFSP} \cite{DAmbrosio2022FFSP}. It comprises two key steps: estimating the propensities of the  projected \ac{SRN} using a computationally efficient \ac{PF} with a sufficiently small number of particles and  solving the resulting low-dimensional filtering problem using \ac{FFSP}.

The foundations of nonlinear filtering theory were established by several researchers \cite{Stratonovich1965Conditional, Kushner1967Dynamical, Zakai1969Optimal}. Since then, various numerical algorithms for estimating the conditional distribution of It\^{o} processes have been developed \cite{Gordon1993PF, Cai1995adaptive, Lototsky1997SpectralApproach, Brigo1999ProjectionOnExp}. Recently, interest has been expressed in an efficient method for approximating the filtering problem for \acp{SRN}.

An \ac{SRN} describes the time evolution of a set of species/agents through reactions and is found in a wide range of applications, such as biochemical reactions, epidemic processes \cite{brauer2012mathematical, anderson2015stochastic}, and transcription and translation in genomics and virus kinetics \cite{srivastava2002stochastic, hensel2009stochastic} (see Section~\ref{subsec:SRNs} for a brief mathematical introduction and \cite{ben2020hierarchical, munker2024generic} for more details, and \cite{goutsias2013markovian} for a broader overview of applications). 

Mathematically, an \ac{SRN} is a continuous-time Markov chain with a countable state space. The filtering problem for \acp{SRN} is nonlinear, and the solution is generally infinite-dimensional; however, one can derive efficient methods for this problem under additional assumptions regarding the distribution shape or the process dynamics. In \cite{Wiederanders2022AutomatedGeneration}, the authors proposed a general framework for deriving equations for conditional moments for several distribution forms. Moreover, the equations for parameters of the exponential family of distributions were also obtained in \cite{Koyama2016ProjectionBased} based on the drift-diffusion approximation. In \cite{Golightly2011particleMCMC}, the authors also applied Langevin approximation to derive the Markov chain Monte Carlo particle method for the filtering problem. In this context, one can further approximate the \ac{SRN} by a linear system, enabling application of the well-known Kalman filter \cite{Folia2018LNAwithKF}. These methods rely on simplifying the stochastic model and introduce bias, making them unsuitable for our framework. 

To account for the complex dynamics of \acp{SRN}, several simulation-based methods called \acp{PF} have been introduced for filtering with noisy observations \cite{Fang2022PF, Fang2023RPF}. This class of methods estimates the \ac{QOI} as an average of the simulated paths, weighted according to their likelihood given the observed trajectory, and employs resampling for numerical stability. There are also versions of the \ac{PF} for the case of noise-free observations \cite{Rathinam2021PFwithExactState, rathinam2024stochastic}. As an alternative to the \ac{PF}, the authors of \cite{DAmbrosio2022FFSP} developed the \acf{FFSP} method for approximating the conditional distribution as a solution to the filtering equation on a truncated state space. The filtering equation characterizes the evolution of the conditional expectation and is structurally similar to the \ac{CME}, so the complexity of its solution is comparable to the corresponding methods for \ac{CME}. The limitation of these methods is that they are computationally expensive in high dimensions (i.e., for \acp{SRN} with many hidden species). This work presents two approaches to dimensionality reduction to overcome this difficulty: \ac{UMP} and \ac{CMP} filters.

This work focuses on the filtering problem with noise-free and continuous-in-time observations, which was addressed in \cite{Rathinam2021PFwithExactState, Duso2018SelectedNodeSSA, Fang2022CMEmodularization}. We consider a $d$-dimensional \ac{SRN} $\boldsymbol{Z}$ with initial distribution $\boldsymbol{Z}(0) \sim \mu$ and split the state vector as follows:
$$
    \boldsymbol{Z}(t) = \begin{bmatrix}\boldsymbol{X}(t) \\ \boldsymbol{Y}(t) \end{bmatrix}, \quad t \in [0, T],
$$
where $\boldsymbol{X}(t)$ is a hidden part and $\boldsymbol{Y}(t)$ is the observed part. The filtering problem is to estimate the marginal distribution of $\boldsymbol{X}(t)$, given all observations accumulated until time $t$:
\begin{equation}
\nonumber
    \pi_{\boldsymbol{y}} (\boldsymbol{x}, t) := \Probcondmu{\boldsymbol{X}(t) = \boldsymbol{x} }{ \boldsymbol{Y}(s) = \boldsymbol{y}(s), s \leq t }.
\end{equation}
Here and further, the subscript $\mu$ emphasizes the dependence on the initial distribution.

The current numerical methods for the filtering problem are computationally expensive for large systems, e.g., when the system has many reactions with high rates or $\dim (\boldsymbol{X})$ is large. In practice, it is often necessary to estimate only the  marginal (potentially one-dimensional) distribution of some entries of $\boldsymbol{X}(t)$ influenced by only a subset of the reactions. For this case, we propose an approach that reduces the effective dimensionality of the filtering problem using \ac{MP}. The central idea of the \ac{MP} method for the filtering is schematically illustrated in Figure~\ref{fig:MP_diagram}.

\begin{figure}[H]
    \centering
    \resizebox{0.9\textwidth}{!}{
    \begin{tikzpicture}[thick]
        \tikzstyle{process} = [rectangle, draw, fill=green!20, text width=5cm, text centered, minimum height=1.5cm]
        \tikzstyle{state} = [rectangle, draw, fill=blue!20, text width=5cm, text centered, minimum height=1.5cm]
        \tikzstyle{decision} = [ellipse, draw, fill=cyan!20, text width=5cm, text centered, minimum height=1.5cm]
        \tikzstyle{arrow} = [thick, ->, >=stealth]
        \tikzstyle{dashed_arrow} = [thick, dashed, ->, >=stealth]
    
        \node[state, fill=blue!30] (fullSRN) {\textbf{full-dimensional} SRN\\ $\boldsymbol{Z}(t) \in \mathbb{Z}^{d}$};
        \node[process, right=8cm of fullSRN] (fullFilter) {Solving  \textcolor{red}{\textbf{\underline{$d$-dimensional}}} system of filtering equations};
        \node[decision, below=3cm of fullFilter] (qoi) {\textbf{Marginal conditional distribution}};
        \node[state, below=3cm  of fullSRN, fill=blue!20] (projSRN) {\textbf{Projected SRN}\\ $\bar{\boldsymbol{Z}}'(t) \in \mathbb{Z}^{d'}$, \textcolor{black}{$d' \ll d$}};
        \node[process, right=1cm of projSRN] (reducedFilter) {Solving \textcolor{blue}{\textbf{\underline{$d'$-dimensional}}} system of filtering equations};
    
        \node (MP) [text centered, below of=fullSRN,yshift=-0.8cm, text width=4cm, font={\bfseries\sffamily}] {\textbf{Markovian \\ Projection}};
        \node (Marg) [text centered, below of=fullFilter, yshift=-0.8cm, font={\bfseries\sffamily}] {\textbf{Marginalization}};

        \draw [dashed] (fullSRN.south east) -- (projSRN.north);
        \draw [dashed] (fullSRN.south west) -- (projSRN.north);
        \draw [arrow] (fullSRN) -- (fullFilter);
        \draw [arrow] (projSRN) -- (reducedFilter);
        \draw [arrow] (reducedFilter.east) -- (qoi);
        \draw [dashed] (fullFilter.south east) -- (qoi.north);
        \draw [dashed] (fullFilter.south west) -- (qoi.north);
    
        \node[draw=red!50, line width=0.5mm, rounded corners, minimum height=2.7cm, minimum width=6.2cm, xshift=-0.3cm, yshift=0.4cm, inner sep=0pt] at (fullFilter) {};
        \node (CD) [text centered, above of=fullFilter, xshift=-1cm, yshift=0.3cm] { \textcolor{red}{\textbf{Curse of dimensionality}} };
        
    \end{tikzpicture}
    }
    \caption{
    A graphical illustration of the proposed projection methods for the filtering problem. Instead of solving a full $d$-dimensional system of filtering equations, \ac{MP} methods approximate the \ac{SRN} dynamics by another \ac{SRN} of lower dimensionality $d' \ll d$, allowing to work with significantly smaller system of filtering equations.
    }
    \label{fig:MP_diagram}
\end{figure}
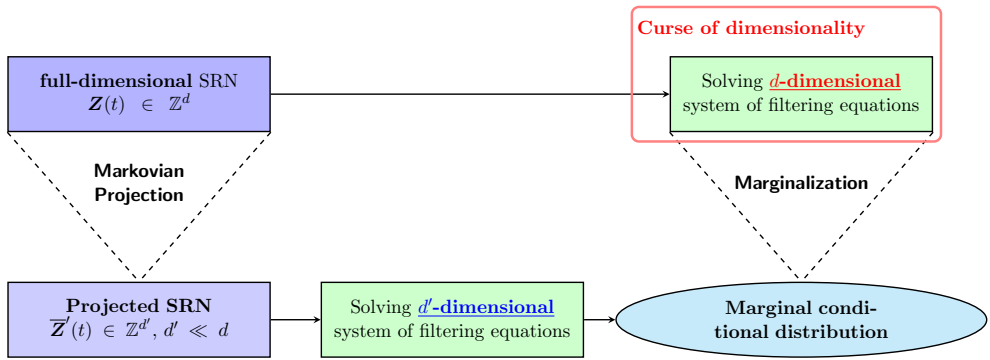

The general idea of \ac{MP} is to mimic the marginal distribution of a multidimensional process via another Markov process of lower dimensionality. This approach was first proposed in \cite{Gyongy1986MP} for It\^o processes and has been employed in many applications \cite{piterbarg2006markovian, Bayer2019Implied, Bentata2009Mimicking}. Recently, this method was also applied to the \acp{SRN} to derive an efficient importance sampling \ac{MC} estimator for rare event probabilities by reducing the dimensionality of the underlying model \cite{Hammouda2023MP}, and to solve the \ac{CME} \cite{ocal2023model}. In contrast to projection methods from statistical physics \cite{hijon2010mori, zhu2018estimation}, the \ac{MP}-based models do not have a computationally challenging memory term that accounts for the non-Markovian part, but they still mimic the marginal distributions exactly.

This work introduces the \acf{FMP} framework for dimensionality reduction in filtering problems for \acfp{SRN}. Our main contributions are:

\begin{enumerate}
    \item We extend the classical \acf{MP} theorem \cite{Hammouda2023MP} from forward problems to filtering problems with exact observations. The resulting \acf{FMP} theorem (Theorem~\ref{th:FMP}) is the first projection result that preserves the marginal conditional distribution of the hidden process.
    \item We formulate two filters for the marginal filtering problem:
    \begin{itemize}
        \item[(i)] a \ac{UMP} filter, a conceptually simple filter based on the direct application of the \ac{MP} theorem; 
        \item[(ii)] a \ac{CMP} filter, derived from the \ac{FMP} theorem. The latter retains exact marginal consistency while enabling low-dimensional computation.
    \end{itemize}
    \item The \ac{CMP} filter combines a \acf{PF} for estimating projected propensities with a \acf{FFSP} for solving the reduced filtering equations. This hybrid design acts as a variance reduction mechanism for \ac{PF}-type estimators, allowing orders of magnitude fewer particles.
    \item We present rigorous error and convergence analysis. We derive the first error decomposition for projection-based filtering (Theorem~\ref{th:sensitivity}) and prove $O(M^{-1/2})$ convergence of the \ac{CMP} filter (Corollary~\ref{corollary:FMP_error_PF}), linking \ac{PF} variance to filtering accuracy.
    \item Computational efficiency and scalability. Extensive numerical experiments on several SRNs demonstrate up to two orders of magnitude speedup over full-dimensional \ac{FFSP} and \ac{PF} methods—the current state-of-the-art approaches for \ac{SRN} filtering while maintaining comparable accuracy in marginal distributions.
\end{enumerate}

Together, these contributions establish the FMP framework as a general, consistent, and computationally efficient approach to filtering in SRNs and related high-dimensional stochastic processes.

The outline of this paper is as follows. The introduction provides the mathematical background of the \ac{SRN} models and discusses the filtering problem. Section~\ref{sec:2_MP} describes the \ac{UMP} filter, which is an adaptation of the standard \ac{MP} approach, and its limitations in the filtering problem. Next, Section~\ref{sec:3_FMP} extends these ideas and presents a novel \ac{FMP} theorem as a natural generalization of \ac{MP} and introduces the \ac{CMP} filter. It also provides an error analysis of the novel filter, followed by two numerical examples and a final discussion.
        \subsection{Stochastic Reaction Networks}
\label{subsec:SRNs}
Consider a chemical system with $d$ interacting species $S_1, \dots, S_d$ and $J$ reactions given by 
\begin{equation}
    \sum_{i=1}^{d} \nu_{ij}^{-} S_i \longrightarrow \sum_{i=1}^{d} \nu_{ij}^{+} S_i, \quad \text{for } j = 1, \dots, J, 
\end{equation}
where $\nu_{ij}^{-}$ is the number of molecules of $S_i$ consumed by reaction $j$, and $\nu_{ij}^{+}$ is the number of molecules of $S_i$ produced by the reaction $j$. Let $\boldsymbol{Z}(t) = \left( Z_1(t), \dots, Z_d(t) \right)^\top \in \mathsf{Z} \subseteq \mathbb{Z}_{\geq 0}^d$ be the copy numbers (amount of molecules) of each species at time $t$.

In a low copy number regime, stochastic effects dominate, and the system dynamics can be modeled as a continuous-time Markov chain \cite{Gillespie1976, Gillespie1992DerivationCME} with transition probabilities between times $t$ and $t+h \, (h>0)$, given by
\begin{equation}
    \Probcond{ \boldsymbol{Z}(t+h) = \boldsymbol{z} + \boldsymbol{\nu}_j }{ \boldsymbol{Z}(t) = \boldsymbol{z} } = a_j (\boldsymbol{z})h + o(h), 
\end{equation}
where $\boldsymbol{\nu}_j := \left( \nu_{1j}^{+} - \nu_{1j}^{-}, \dots,  \nu_{dj}^{+} - \nu_{dj}^{-}  \right)^\top$ is a stoichiometric vector, and $a_j: \mathsf{Z} \to \mathbb{R}_{\geq 0}$ is the \textit{propensity function} of reaction $j$. For chemical reactions, the propensities are typically given by the \textit{mass action kinetics}: 
\begin{equation}
    \label{eq:mass_action}
    a_j (\boldsymbol{z}) = 
    \begin{dcases}
        \theta_j \prod_{i=1}^{d} \frac{z_i !}{(z_i - \nu_{ij}^{-})!} & \text{if } \forall i \; z_i \geq \nu_{ij}^{-} \\
        0 & \text{otherwise,} \\
    \end{dcases} 
\end{equation}
where the constant $\theta_j$ is the \textit{reaction rate} of reaction $j$. This is not the only possible form of propensity functions, and the results in this work can be directly extended to other types of propensity functions (e.g., Hill-type propensities \cite{murray2002mathematical}).

Using the random time change representation \cite{Ethier1986MarkovProcess}, one can express the current state of the process $\boldsymbol{Z}$ via mutually independent unit-rate Poisson processes $R_1, \dots, R_J$:
\begin{equation}
\label{eq:RTC_representation}
    \boldsymbol{Z}(t) = \boldsymbol{Z}(0) + \sum_{j=1}^{J} R_j \left( \int_0^t  a_j \left( \boldsymbol{Z}(s) \right) \diff s \right)  \boldsymbol{\nu}_j ,
\end{equation}
where $\boldsymbol{Z}(0)$ is a random vector (independent of $R_1, \dots, R_J$) characterized by the initial distribution $\mu$. 

A fundamental problem associated with the \ac{SRN} is to determine $p (\boldsymbol{z}, t) := \Probmu{\boldsymbol{Z}(t) = \boldsymbol{z}}$ for a given initial distribution $\mu$ and a set of reactions $\{ a_j, \boldsymbol{\nu}_j \}, \, j = 1,\dots,J$. This forward problem can be addressed using \ac{MC} methods, for example, a \acf{SSA} \cite{Gillespie1977SSA}. This approach involves sampling exponential waiting times between reactions and recomputing the propensities after each. Another approach is to approximate the solution of the \acf{CME} \cite{Gillespie1992DerivationCME}, which describes the time evolution of the \ac{PMF} $p(\boldsymbol{z}, t)$.

This work focuses on the filtering problem, which is structurally similar to the forward problem. Therefore, the numerical methods for its solution are closely related to the corresponding methods for the forward problem.

\subsection{Filtering Problem}
\label{subsec:filtering_problem}
Consider a noise-free filtering problem with exact observations, where the current state $\boldsymbol{Z}(t)$ at time $t \in [0, T]$ can be split as follows:
$$
    \boldsymbol{Z}(t) = \begin{bmatrix} \boldsymbol{X}(t) \\ \boldsymbol{Y}(t) \end{bmatrix},
$$
where $\boldsymbol{X}(t) \in \mathsf{X}$ is the hidden process corresponding to the unobserved species, and $\boldsymbol{Y}(t) \in \mathsf{Y}$ is the observed process corresponding to the species that can be tracked. For continuous and noise-free observations, the filtering problem is to estimate the conditional distribution of the unobserved process:
\begin{equation}
\label{eq:pi_def}    
    \pi_{\boldsymbol{y}} (\boldsymbol{x}, t) := \Probcondmu{\boldsymbol{X}(t) = \boldsymbol{x} }{ \boldsymbol{Y}(s) = \boldsymbol{y}(s), s \leq t }
\end{equation}
for a given trajectory $\{ \boldsymbol{y}(s), s \leq t \}$. 

Following \cite{DAmbrosio2022FFSP}, we introduce the following non-explosivity condition to ensure the uniqueness of $\pi_{\boldsymbol{y}}$:
\begin{equation}
\label{eq:non_expl_cond} 
    \sup_{s \in [0, T]} \sum_{j=1}^{J} \Emu{a_j^2 (\boldsymbol{Z}(s))} < \infty.
\end{equation}
This condition means the system state does not increase to infinity (almost surely) in a finite time interval $[0, T]$. This assumption holds for most of the \acp{SRN} considered in the literature.

We split the stoichiometric vectors $\boldsymbol{\nu}_j$ into parts $\boldsymbol{\nu}_{\boldsymbol{x}, j}$ and $\boldsymbol{\nu}_{\boldsymbol{y}, j}$, corresponding to $\boldsymbol{X}$ and $\boldsymbol{Y}$, respectively. Let us denote by $\mathcal{O} := \{ j : \boldsymbol{\nu}_{\boldsymbol{y}, j} \neq \boldsymbol{0} \}$ the set of observable reactions that can alter $\boldsymbol{Y}$ and denote by $\mathcal{U} := \{ j : \boldsymbol{\nu}_{\boldsymbol{y}, j} = \boldsymbol{0} \}$ the set of unobservable reactions that cannot alter $\boldsymbol{Y}$. For a given trajectory $\{ \boldsymbol{y}(s), s \leq T \}$, the corresponding jump times are denoted by $t_1, \dots, t_N$.

Under the non-explosivity assumption \eqref{eq:non_expl_cond}, the conditional distribution $\pi_{\boldsymbol{y}}$ at each of the intervals $(t_k, t_{k+1})$ evolves according to the following \ac{ODE} \cite{DAmbrosio2022FFSP, Rathinam2021PFwithExactState}:
\begin{equation}
    \label{eq:filtering_equation_pi}
    \begin{aligned}
        \frac{\mathrm{d}}{\mathrm{d} t} \pi_{\boldsymbol{y}} (\boldsymbol{x}, t) = &\sum_{j \in \mathcal{U}} \pi_{\boldsymbol{y}} (\boldsymbol{x} - \boldsymbol{\nu}_{\boldsymbol{x}, j}, t) a_j \left( \boldsymbol{x} - \boldsymbol{\nu}_{\boldsymbol{x}, j}, \boldsymbol{y}(t_k) \right) - \sum_{j \in \mathcal{U}} \pi_{\boldsymbol{y}}(\boldsymbol{x}, t) a_j (\boldsymbol{x}, \boldsymbol{y}(t_k)) \\
        &- \pi_{\boldsymbol{y}} (\boldsymbol{x}, t) \cdot \sum_{j \in \mathcal{O}} \left( a_j (\boldsymbol{x}, \boldsymbol{y}(t_k)) -  \sum_{\hat{\boldsymbol{x}} \in \mathsf{X}} a_j (\hat{\boldsymbol{x}}, \boldsymbol{y}(t_k)) \pi_{\boldsymbol{y}} (\hat{\boldsymbol{x}}, t)\right).
    \end{aligned}
\end{equation}

At each jump point $t_k$, we know exactly how the state vector changed and therefore can identify the set of reactions that might have caused the jump: $\mathcal{O}_k := \{ j : \boldsymbol{\nu}_{\boldsymbol{y}, j} = \boldsymbol{y}(t_k) - \boldsymbol{y}(t_k^{-}) \}$. At jump times $t_k$, the conditional probability satisfies the following:
\begin{equation}
    \label{eq:filtering_equation_pi_jump}
    \pi_{\boldsymbol{y}} (\boldsymbol{x}, t_k) = \frac{ 
    \sum\limits_{j \in \mathcal{O}_k} a_j (\boldsymbol{x} - \boldsymbol{\nu}_{\boldsymbol{x}, j}, \boldsymbol{y}(t_{k-1})) \pi_{\boldsymbol{y}} (\boldsymbol{x} - \boldsymbol{\nu}_{\boldsymbol{x}, j}, t_{k}^{-})
    }{ 
    \sum\limits_{\hat{\boldsymbol{x}} \in \mathsf{X}} \sum\limits_{j \in \mathcal{O}_k} a_j (\hat{\boldsymbol{x}}, \boldsymbol{y}(t_{k-1})) \pi_{\boldsymbol{y}} (\hat{\boldsymbol{x}}, t_k^{-})
    }.
\end{equation}

The pair of equations \eqref{eq:filtering_equation_pi} and \eqref{eq:filtering_equation_pi_jump}, called the \textit{filtering equations}, characterize the complete dynamics of $\pi_{\boldsymbol{y}}$. The initial condition 
\begin{equation}
\label{eq:pi0_def}
    \pi_{\boldsymbol{y}} (\boldsymbol{x}, 0) = \Probcond{\boldsymbol{X}(0) = \boldsymbol{x}}{\boldsymbol{Y}(0) = \boldsymbol{y}(0)} 
\end{equation} 
can be derived from the initial distribution $\mu$, which is assumed to be given.

Due to the non-linearity of \eqref{eq:filtering_equation_pi} and \eqref{eq:filtering_equation_pi_jump} it is more convenient to introduce an unnormalized \ac{PMF} $\rho_{\boldsymbol{y}} \propto \pi_{\boldsymbol{y}}$ that obeys the following equations:
\begin{equation}
\label{eq:filtering_equation_rho}
\begin{aligned}
    &\text{for } t \in (t_k, t_{k+1}) : \\
    &\frac{\mathrm{d}}{\mathrm{d} t} \rho_{\boldsymbol{y}} (\boldsymbol{x}, t) = \sum_{j \in \mathcal{U}} \rho_{\boldsymbol{y}} (\boldsymbol{x} - \boldsymbol{\nu}_{\boldsymbol{x}, j}, t) a_j (\boldsymbol{x}-\boldsymbol{\nu}_{\boldsymbol{x}, j}, \boldsymbol{y}(t_k))
    - \sum_{j=1}^{J} \rho_{\boldsymbol{y}} (\boldsymbol{x}, t) a_j (\boldsymbol{x},\boldsymbol{y}(t_k)),
\end{aligned}
\end{equation}

\begin{equation}
\begin{aligned}
\label{eq:filtering_equation_rho_jump}
    &\text{for } t = t_k : \\
    &\rho_{\boldsymbol{y}} (\boldsymbol{x}, t_k) =  \frac{1}{\abs{\mathcal{O}_k}} \sum\limits_{j \in \mathcal{O}_k} a_j (\boldsymbol{x} - \boldsymbol{\nu}_{\boldsymbol{x}, j}, \boldsymbol{y}(t_{k-1})) \rho_{\boldsymbol{y}} (\boldsymbol{x} - \boldsymbol{\nu}_{\boldsymbol{x}, j}, t_{k}^{-}),
\end{aligned}
\end{equation}
with the initial condition $\rho_{\boldsymbol{y}}(\boldsymbol{x}, 0) = \pi_{\boldsymbol{y}}(\boldsymbol{x}, 0)$.  
After reaching a solution to \eqref{eq:filtering_equation_rho} and \eqref{eq:filtering_equation_rho_jump},  $\pi_{\boldsymbol{y}}$ can be obtained with normalization:
\begin{equation}
\label{eq:rho_normalization_full}
    \pi_{\boldsymbol{y}}(\boldsymbol{x}, t) = \frac{\rho_{\boldsymbol{y}} (\boldsymbol{x}, t)}{\sum\limits_{\hat{\boldsymbol{x}} \in \mathsf{X}} \rho_{\boldsymbol{y}} (\hat{\boldsymbol{x}}, t)} .
\end{equation}

The filtering equation \eqref{eq:filtering_equation_pi} should be solved simultaneously for all possible states $\boldsymbol{x} \in \mathsf{X}$, leading to a coupled system of possibly an infinite number of equations. Thus, it requires numerical approximation. The same applies to the unnormalized version \eqref{eq:filtering_equation_rho}.

The following section briefly summarizes the two methods for numerically approximating \eqref{eq:pi_def}.

\subsubsection{Filtered Finite State Projection}
The \acf{FFSP} method \cite{DAmbrosio2022FFSP} is based on truncating the state space, similar to the idea introduced in \cite{Munsky2006FSP} for solving the \acf{CME}. Let us consider a finite subset of hidden states $\mathsf{X}_N \subset \mathsf{X}$ with $ \abs{\mathsf{X}_N} = N < \infty$ and assume that the probability of visiting the remaining states at the time interval $[0,T]$ is negligible. 

By setting $\rho_{\boldsymbol{y}}(\hat{\boldsymbol{x}}, \cdot) = 0$ for all $\hat{\boldsymbol{x}} \in \mathsf{X} \setminus \mathsf{X}_N$, Equation \eqref{eq:filtering_equation_rho} becomes a system of $N$ linear \acp{ODE}, which can be solved analytically or numerically. 

To make the error from the state space truncation computable, it is necessary to assume that the observed propensities are bounded for each $\boldsymbol{y}$. That is, there exists a function $C_1 : \mathsf{Y} \to \mathbb{R}_{\geq 0}$ such that
\begin{equation}
    \label{eq:assumption_bound_propens} 
    \sup_{\boldsymbol{x} \in \mathsf{X}} \sum_{j \in \mathcal{O}} a_j \left( \boldsymbol{x}, \boldsymbol{y} \right) \leq C_1(\boldsymbol{y}) .
\end{equation}

The error of the \ac{FFSP} method can be controlled by selecting a truncated space $\mathsf{X}_N$ according to \cite[Theorems~2 and 4]{DAmbrosio2022FFSP}. More precisely, the error can be reduced by adaptively including more states in $\mathsf{X}_N$ and including more equations in the \ac{FFSP} system (for more detailed numerical algorithms and the error analysis of this method, refer to \cite{DAmbrosio2022FFSP}).

The main issue of the \ac{FFSP} method is that once the number of states $N = \abs{\mathsf{X}_N}$ is too large, solving the system \eqref{eq:filtering_equation_rho} becomes computationally expensive. In particular, for a $n$-dimensional truncated state space with $m$ states in each direction, the total number of states is $N = m^n$, that is, it increases exponentially with respect to the dimensionality $n$ of the hidden state space.

\subsubsection{Particle Filter}
\label{subsubsec:PF}
Another method for the filtering problem, called the \ac{PF}, was introduced in \cite{Rathinam2021PFwithExactState}. The idea is to sample a pair of processes $\boldsymbol{V}$ and $w$ such that 
\begin{equation}
    \label{eq:PF_expectation_rho}
    \rho_{\boldsymbol{y}} (\boldsymbol{x}, t) = \Emu{ 1_{\{ \boldsymbol{V}(t) = \boldsymbol{x} \}} w(t) },
\end{equation}
where $1_{\{\cdot\}}$ denotes the indicator function defined as follows:
$$
    1_{A} (\omega) = 
    \begin{cases}
        1 & \text{if } \omega \in A \\
        0 & \text{if } \omega \not \in A \\
    \end{cases} .
$$

The process $\boldsymbol{V}$ evolves according to the original \ac{SRN} with all observable reactions removed with additional reactions at jump times $t_k$:
\begin{equation}
    \label{eq:PF_process_V}
    \boldsymbol{V}(t) = \boldsymbol{X}(0) + \sum_{j\in\mathcal{U}} R_j \left( \int_{0}^t  a_j \left( \boldsymbol{V}(s), \boldsymbol{y}(t) \right) \diff s \right) \boldsymbol{\nu}_{\boldsymbol{x},j} + \sum_{t_k \leq t} \boldsymbol{\nu}_{\boldsymbol{x}, \ell_k},
\end{equation}
where index $\ell_k$ is a uniform random draw from the set $\mathcal{O}_k$. The last term in \eqref{eq:PF_process_V} adjusts for the observed jumps in trajectory $\boldsymbol{y}$.
    
The process $w$ represents a weight of the trajectory of $\boldsymbol{V}$ given observations $\{ \boldsymbol{y}(s), s \leq t \}$. It is initialized as $w(0) = 1$ and can be computed exactly given $\boldsymbol{V}$:
\begin{equation}
    \label{eq:PF_process_w}
        w(t) = w(0) \exp \left[ - \sum_{j \in \mathcal{O}} \int_{0}^t a_j(\boldsymbol{V}(s), \boldsymbol{y}(t_k)) \diff s \right] \cdot \prod_{t_k \leq t} a_{\ell_k} (\boldsymbol{V}(t_k^{-}), \boldsymbol{y}(t_k^{-})).
\end{equation}
The exponential term corresponds to the likelihood of not observing reactions $\mathcal{O}$ in between jumps, and the last term adjusts for the observed jumps. 

Following \cite{Rathinam2021PFwithExactState}, we use  \ac{SSA} to simulate $\boldsymbol{V}$ in between jumps $t \in [t_k, t_{k+1})$ and draw a random reaction index $j_{k+1}$ to fire at jumps $t=t_{k+1}$. A trajectory of $w$ is computed exactly based on \eqref{eq:PF_process_w}.

Let $(\boldsymbol{V}_{i}, w_{i}), \, i = 1, \dots, M$ be independent realizations (particles) of $(\boldsymbol{V}, w)$, then 
\begin{equation}
    \label{eq:PF_weighed_avg}
    \rho_{\boldsymbol{y}} (\boldsymbol{x}, t) \approx \frac{1}{M} \sum_{i=1}^{M} 1_{\left\{ \boldsymbol{V}_{i}(t) = \boldsymbol{x} \right\}} w_{i}(t).
\end{equation}

In practice, only a few particles significantly contribute to the weighted average \eqref{eq:PF_weighed_avg} even using many samples, (i.e., the effective sample size can be critically small). This problem is known as sample collapse and occurs because, for most trajectories, the weight process $w$ rapidly decreases to zero. To avoid this problem, one can use the \textit{resampling} procedure, discarding particles with small weights and multiplying the number of particles with large weights (e.g., using a bootstrap algorithm \cite{Gordon1993PF}). However, the resampling step breaks particle independence and introduces additional variance to the resulting estimator. Therefore, a trade-off exists between the error introduced by resampling and the error from weight degeneracy. This problem becomes significant for high-dimensional processes \cite{Snyder2008HighDimPF, Djuric2013HighDimPF}. Refer to \cite{Bain2009Fundamentals} for a more detailed discussion on this problem and other resampling algorithms. 

In addition, a common problem of \ac{MC}-based methods is related to high variance. The indicator function in \eqref{eq:PF_weighed_avg} is prone to this problem if $\boldsymbol{x}$ is a low-probability state (e.g., if $\boldsymbol{x}$ is in the tail of the distribution). In contrast, if we are not interested in the distribution itself but in the conditional expectation of $f(\boldsymbol{X}(t))$ with a given function $f$, then the \ac{PF} typically outperform other methods.

\subsection{Marginal Filtering Problem}
\label{sec:margin_filtering_problem}
In this paper, we focus on situations in which only the distribution of a small subset of the hidden species should be estimated. The hidden state vector is split as follows: $ \boldsymbol{X}(t) = \begin{bmatrix} \boldsymbol{X}'(t) \\ \boldsymbol{X}''(t) \end{bmatrix} $, where $\boldsymbol{X}'(t) \in \mathsf{X}'$ corresponds to the hidden species of interest, and $\boldsymbol{X}''(t) \in \mathsf{X}''$ corresponds to the remaining hidden species. The goal of the marginal filtering problem is to estimate the following:
\begin{equation}
\label{eq:filtering_problem_margin}
    \pi'_{\boldsymbol{y}} (\boldsymbol{x}', t) = \Probcondmu{\boldsymbol{X}'(t) = \boldsymbol{x}' }{ \boldsymbol{Y}(s) = \boldsymbol{y}(s), s \leq t },
\end{equation}
for a given trajectory $\{ \boldsymbol{y}(s), s \leq t \}$. Clearly,  $\pi'_{\boldsymbol{y}}$ can be derived from $\pi_{\boldsymbol{y}}$ via marginalization:
\begin{equation}
\nonumber    
    \pi'_{\boldsymbol{y}} ({\boldsymbol{x}'}, t) = \sum_{{\boldsymbol{x}''} \in \mathsf{X}''} \pi_{\boldsymbol{y}} \left( \begin{bmatrix} \boldsymbol{x}' \\ \boldsymbol{x}'' \end{bmatrix}, t \right).
\end{equation}
However, estimating $\pi_{\boldsymbol{y}}$ entails the curse of dimensionality and significant computational cost for solving the high-dimensional systems \eqref{eq:filtering_equation_pi} and \eqref{eq:filtering_equation_pi_jump}. Therefore, we aim to exclude species from $\boldsymbol{X}''$ and solve the filtering problem only for the $d'$-dimensional process
$$
    \boldsymbol{Z}'(t) := 
    \begin{bmatrix}
        \boldsymbol{X}'(t) \\
        \boldsymbol{Y}(t)
    \end{bmatrix} .
$$
This process is coupled with the $d''$-dimensional process $\boldsymbol{Z}''(t) := \boldsymbol{X}''(t)$, complicating the analysis of $\boldsymbol{Z}'$ as a separate process. The overall splitting of the process $\boldsymbol{Z}$ is summarized in the diagram:

\begin{center}
\begin{tikzpicture}[node distance=3cm, auto]
    \node (Z) [align=left]{$\boldsymbol{Z} =
        \begin{bmatrix}  
            \boldsymbol{X}' \\ 
            \boldsymbol{X}'' \\ 
            \boldsymbol{Y} \\ 
        \end{bmatrix}$};
    \node (Z') [align=left, right of=Z] {$\begin{bmatrix}
            \boldsymbol{X}' \\
            \boldsymbol{Y}
        \end{bmatrix}
        = \boldsymbol{Z}'$};
    \node (Z'') [align=left, below of=Z', node distance=1cm] {$\phantom{'}\begin{bmatrix} \boldsymbol{X}'' \end{bmatrix} = \boldsymbol{Z}''$};
    
    \draw[->, thick] (Z.east) -- (Z'.west);
    \draw[->, thick] (Z.east) -- (Z''.west);

\end{tikzpicture}
\end{center}

To clarify the difficulty of treating $\boldsymbol{Z}'$ as a distinct process separate from $\boldsymbol{Z}''$, we consider the random time change representation \eqref{eq:RTC_representation} for it:
\begin{equation}
\label{eq:RTC_splited}
\begin{aligned}
    \boldsymbol{Z}'(t)  &= \boldsymbol{Z}'(0) + \sum_{j=1}^{J} R_j \left( \int_0^t  a_j \left( \begin{bmatrix} \boldsymbol{Z}'(t) \\ \boldsymbol{Z}''(t)\end{bmatrix} \right) \diff s \right)  \boldsymbol{\nu}_j' , \\ 
    \boldsymbol{Z}''(t)  &= \boldsymbol{Z}''(0) + \sum_{j=1}^{J} R_j \left( \int_0^t  a_j \left( \begin{bmatrix} \boldsymbol{Z}'(t) \\ \boldsymbol{Z}''(t)\end{bmatrix} \right) \diff s \right)  \boldsymbol{\nu}_j'' , 
\end{aligned}
\end{equation}
where $\boldsymbol{\nu}_j'$ and $\boldsymbol{\nu}_j''$ are the corresponding parts of the stoichiometric vector $\boldsymbol{\nu}_j$. The process $\boldsymbol{Z}'$ can be considered as an \ac{SRN} with fewer species, but its propensities are random (due to $\boldsymbol{Z}''(t)$). Moreover, $\boldsymbol{Z}'$ is not a Markov process because $\boldsymbol{Z}''(t)$ depends on the past states of $\boldsymbol{Z}'$, which prevents applying classical filtering approaches. 

The following two sections discuss constructing a $d'$-dimensional Markov process that mimics $\boldsymbol{Z}'$, enabling solving the marginal filtering problem more efficiently.

    \section{The Unconditional Markovian Projection Filter}
        \label{sec:2_MP}
        We present the \acf{UMP} filter, which is based on the MP introduced for forward problems. \ac{MP} is a dimensionality reduction approach that preserves the marginal (unconditional) distribution while resulting in a Markovian process. It was originally introduced for It\^{o} processes \cite{Gyongy1986MP} and recently adapted to \acp{SRN} \cite{Hammouda2023MP}. This approach is naive for the filtering settings because it applies the \ac{MP} framework for projected propensities, ignoring the observed trajectory. The aim is to construct a $d'$-dimensional process $\bar{\boldsymbol{Z}}'$ with the same marginal (in time) distribution as $\boldsymbol{Z}'$. The crucial point is that this surrogate $\bar{\boldsymbol{Z}}'$ (unlike $\boldsymbol{Z}'$) is Markovian, allowing the filtering problem to be solved with classical methods.

Let us recall the (unconditional) \ac{MP} for \acp{SRN} \cite{Hammouda2023MP} in the following Theorem.

\begin{theorem}[Markovian projection for \acp{SRN}]
    \label{th:MP}
    Let $\boldsymbol{Z}(t) = \begin{bmatrix} \boldsymbol{Z}'(t) \\ \boldsymbol{Z}''(t)\end{bmatrix}$ be a non-explosive (i.e., satisfying~\ref{eq:non_expl_cond}) \ac{SRN} with initial distribution $\mu$. 
    A $d'$-dimensional stochastic process $\bar{\boldsymbol{Z}}'$ via independent Poisson processes $\bar{R}_1, \dots, \bar{R}_J$ is defined as follows:
    \begin{equation}
    \label{eq:MP_model}
        \bar{\boldsymbol{Z}}'(t) = \bar{\boldsymbol{Z}}'(0) + \sum_{j=1}^{J}  \bar{R}_j \left( \int_0^t \bar{a}_j \left( \bar{\boldsymbol{Z}}'(s), s \right) \diff s \right) \boldsymbol{\nu}_{j}' , \quad t \in [0,T]
    \end{equation}
    with 
    \begin{equation}
    \label{eq:MP_a_bar_def}
        \bar{a}_j ({\boldsymbol{z}}', t) := \Econdmu{ a_j(\boldsymbol{Z}(t)) }{ \boldsymbol{Z}'(t) = {\boldsymbol{z}}'}, \quad {\boldsymbol{z}}' \in \mathbb{Z}_{\geq 0}^{d'}
    \end{equation}
    and $\bar{\boldsymbol{Z}}'(0) \overset{d}{=} \boldsymbol{Z}'(0)$. \footnote{The symbol $\overset{d}{=}$ denotes the equality in distribution.} Then, $\bar{\boldsymbol{Z}}'(t)$ has the same distribution as $\boldsymbol{Z}'(t)$ for all $t \in [0, T]$.
\end{theorem}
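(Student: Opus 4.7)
My plan is to establish $\bar{\boldsymbol{Z}}'(t) \overset{d}{=} \boldsymbol{Z}'(t)$ for every $t \in [0,T]$ by showing that the two one-time PMFs
$$
p'(\boldsymbol{z}', t) := \Probmu{\boldsymbol{Z}'(t) = \boldsymbol{z}'}, \qquad \bar{p}'(\boldsymbol{z}', t) := \Probmu{\bar{\boldsymbol{Z}}'(t) = \boldsymbol{z}'}
$$
satisfy the same linear forward (chemical master) system with matching initial data, and then appealing to uniqueness. This is a Gy\"{o}ngy-type mimicking argument adapted to the countable state setting of \acp{SRN}.

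First, I would derive an autonomous evolution equation for $p'$ from the full \ac{CME} of $\boldsymbol{Z}$. Starting from
$$
\frac{\mathrm{d}}{\mathrm{d} t} p(\boldsymbol{z}, t) = \sum_{j=1}^{J} \left[ a_j(\boldsymbol{z} - \boldsymbol{\nu}_j)\, p(\boldsymbol{z} - \boldsymbol{\nu}_j, t) - a_j(\boldsymbol{z})\, p(\boldsymbol{z}, t) \right],
$$
I split $\boldsymbol{z} = (\boldsymbol{z}', \boldsymbol{z}'')$ and $\boldsymbol{\nu}_j = (\boldsymbol{\nu}'_j, \boldsymbol{\nu}''_j)$ and sum over $\boldsymbol{z}'' \in \mathsf{X}''$. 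A change of variables $\boldsymbol{z}'' \mapsto \boldsymbol{z}'' + \boldsymbol{\nu}''_j$ in the gain-type term collapses the $\boldsymbol{z}''$-shift, and the key algebraic identity
$$
\sum_{\boldsymbol{z}''} a_j(\boldsymbol{z}', \boldsymbol{z}'')\, p(\boldsymbol{z}', \boldsymbol{z}'', t) \;=\; \bar{a}_j(\boldsymbol{z}', t)\, p'(\boldsymbol{z}', t)
$$
follows directly from the definition \eqref{eq:MP_a_bar_def} of the projected propensity as a conditional expectation. After these manipulations, $p'$ satisfies a \ac{CME}-type system of \acp{ODE} closed in the projected propensities $\bar{a}_j(\cdot, t)$ and the reduced stoichiometric shifts $\boldsymbol{\nu}'_j$.

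Second, I would derive the forward equation for the constructed process $\bar{\boldsymbol{Z}}'$. Since \eqref{eq:MP_model} is a random time change representation driven by independent unit-rate Poisson processes with non-negative intensities depending only on the current state of $\bar{\boldsymbol{Z}}'$ and on the current time, standard results (e.g., Ethier--Kurtz \cite{Ethier1986MarkovProcess}) imply that $\bar{\boldsymbol{Z}}'$ is a time-inhomogeneous Markov jump process whose PMF obeys
$$
\frac{\mathrm{d}}{\mathrm{d} t} \bar{p}'(\boldsymbol{z}', t) = \sum_{j=1}^{J} \left[ \bar{a}_j(\boldsymbol{z}' - \boldsymbol{\nu}'_j, t)\, \bar{p}'(\boldsymbol{z}' - \boldsymbol{\nu}'_j, t) - \bar{a}_j(\boldsymbol{z}', t)\, \bar{p}'(\boldsymbol{z}', t) \right],
$$
which is structurally identical to the equation just derived for $p'$. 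Combined with the hypothesis $\bar{\boldsymbol{Z}}'(0) \overset{d}{=} \boldsymbol{Z}'(0)$, both $p'$ and $\bar{p}'$ solve the same Cauchy problem, and uniqueness forces $p' \equiv \bar{p}'$ on $[0,T]$, which yields the claimed equality in distribution.

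The hard part will be the analytic justifications on the countably infinite state space $\mathsf{X}'$: (i) interchanging the sum over $\boldsymbol{z}''$ with the time derivative in the marginalization step; (ii) ensuring that $\bar{a}_j(\boldsymbol{z}', t)$ is well defined, non-negative, and finite whenever $p'(\boldsymbol{z}', t) > 0$, with an arbitrary admissible extension on the $p'$-null set so that $\bar{\boldsymbol{Z}}'$ is itself non-explosive; and (iii) obtaining uniqueness for the linear system solved by $\bar{p}'$. All three reduce to the non-explosivity hypothesis \eqref{eq:non_expl_cond}, which bounds $\sup_{s \in [0,T]} \sum_j \Emu{a_j^2(\boldsymbol{Z}(s))}$ and therefore controls the tail mass escaping to infinity; I expect (iii) to follow from a Gr\"{o}nwall estimate on finite truncations $\mathsf{X}'_N \subset \mathsf{X}'$ followed by a passage to the limit $N \to \infty$, analogously to the uniqueness arguments underlying the \ac{FFSP} analysis of \cite{DAmbrosio2022FFSP}.
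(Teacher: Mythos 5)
Your proposal is correct and follows essentially the same route as the paper's own proof: marginalizing the full \ac{CME} over $\boldsymbol{z}''$, using the identity $\sum_{\boldsymbol{z}''} a_j\,p = \bar{a}_j\,p'$ to close the system in the projected propensities, matching it to the forward equation of $\bar{\boldsymbol{Z}}'$, and concluding by uniqueness of the initial value problem. Your explicit attention to the interchange of summation and differentiation, the definition of $\bar{a}_j$ on $p'$-null states, and uniqueness on the countable state space is if anything more careful than the paper's treatment of these points.
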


\begin{proof}
    One can derive the statement from \cite[Theorem 3.1]{Hammouda2023MP}. Appendix~\ref{sec:MP_proof} also provides an alternative proof using the \ac{CME}.
\end{proof}

Theorem~\ref{th:MP} allows the construction of another \ac{SRN} $\bar{\boldsymbol{Z}}'$ containing only the species necessary for the filtering problem \eqref{eq:filtering_problem_margin} while preserving the marginal (in time) distribution. Similar to $\boldsymbol{Z}'$, we split the \ac{MP} process $\bar{\boldsymbol{Z}}' = \begin{bmatrix} \bar{\boldsymbol{X}}' \\  \bar{\boldsymbol{Y}}' \end{bmatrix}$ and define the \ac{MP} filtering problem as follows:
\begin{equation}
    \label{eq:pi_bar_def}
    \bar{\pi}'_{\boldsymbol{y}} (\boldsymbol{x}', t) := \Probcondmu{\bar{\boldsymbol{X}}'(t) = \boldsymbol{x}' }{ \bar{\boldsymbol{Y}}(s) = \boldsymbol{y}(s), s \leq t }.
\end{equation}
As $\dim{\bar{\boldsymbol{X}}'} = \dim{\boldsymbol{X}}' <  \dim{\boldsymbol{X}}$, it is expected that the \ac{MP} filtering problem can be solved much more efficiently than the original problem. In particular, for the \ac{FFSP} method, the truncated state space $\mathsf{X}'_N$ of $\bar{\boldsymbol{X}}'$ contains significantly fewer states than $\mathsf{X}_N$, and hence, fewer equations. 

Moreover, the projected \ac{SRN} may have fewer reactions because some $\nu_j'$ may be null vectors, especially when  $\dim{\boldsymbol{X}}' \ll  \dim{\boldsymbol{X}}$. Without loss of generality, we denote $\{ 1, \dots, J' \}$ with $J' \leq J$ the indices of reactions in the projected \ac{SRN}. All observed reactions from $\mathcal{O}$ are preserved, and the set of projected hidden reactions $\mathcal{U}'$ may be smaller than the set $\mathcal{U}$ for the full-dimensional system.

To estimate projected propensities \eqref{eq:MP_a_bar_def}, we use a \ac{MC} estimator (see Appendix~\ref{sec:MP_appendix} for more details). One can also use the $L^2$ regression method \cite{Hammouda2023MP}, which is especially efficient if some structural features of the functions $\bar{a}_j$ are known.

New propensities $\{\bar{a}_j\}_{j=1}^{J'}$ depend not only on a current state ${\boldsymbol{z}}'$ but also on time $t$, which can introduce some challenges. Algorithms adapted to time-dependent propensity functions (e.g., the modified next reaction method \cite[Section~5]{Anderson2007ModifiedNextReaction}) must be applied to simulate $\bar{\boldsymbol{Z}}'$.

The main issue with the standard \ac{MP} for the filtering problem is that it does not guarantee that $\bar{\pi}'_{\boldsymbol{y}}$ defined in \eqref{eq:pi_bar_def} equals to the distribution of interest $\pi'_{\boldsymbol{y}}$ from \eqref{eq:filtering_problem_margin} because the filtering problem has a condition on the past process states, whereas the \ac{MP} theorem states only that marginal (in time) distributions of $\bar{\boldsymbol{Z}}'(t)$ and $\boldsymbol{Z}'(t)$ coincide for any fixed $t$. However, we still can define a filter based on the standard \ac{MP}, and as we show in Section~\ref{sec:4_Examples}, the bias of this filter may be sufficiently small in some cases. The overall scheme for the \ac{UMP} filter based on the presented standard \ac{MP} theorem is given in Appendix~\ref{sec:MP_appendix}.

The following section proposes a new \ac{MP} method explicitly designed for the filtering problem to resolve this inconsistency.

    \section{The Consistent Conditional Markovian Projection Filter}
        \label{sec:3_FMP}
        In this section, we state a novel \ac{MP}-type theorem that is adapted to the filtering problem and design the corresponding consistent filtering algorithm. Our modification introduces an additional conditioning on the observed trajectory within the projected propensities, resulting in another surrogate \ac{SRN} that yields the desired marginal conditional distributions. The result is summarized in the following theorem, which can be considered an extension of Theorem~\ref{th:MP} for the filtering problem.
 
\begin{theorem}[\acf{FMP} for \acp{SRN}]
    \label{th:FMP}
    Let 
    $\boldsymbol{Z} = \begin{bmatrix} \boldsymbol{X}' \\ \boldsymbol{X}'' \\ \boldsymbol{Y} \end{bmatrix}$ 
    be a non-explosive $d$-dimensional \ac{SRN} with the initial distribution $\mu$ and $\boldsymbol{Z}'(t) := \begin{bmatrix} \boldsymbol{X}'(t) \\ \boldsymbol{Y}(t)  \end{bmatrix} \in \mathbb{Z}_{\geq 0}^{d'} $. 
    The $d'$-dimensional stochastic process $\Tilde{\boldsymbol{Z}}'(t) = \begin{bmatrix} \Tilde{\boldsymbol{X}}'(t) \\ \Tilde{\boldsymbol{Y}}(t)  \end{bmatrix} $ is defined via independent Poisson processes $\Tilde{R}_1, \dots, \Tilde{R}_J$ as follows:
    \begin{equation}
        \label{eq:FMP_model}
        \Tilde{\boldsymbol{Z}}'(t) = \Tilde{\boldsymbol{Z}}'(0) + \sum_{j=1}^{J} \Tilde{R}_j \left( \int_0^t \Tilde{a}_j \left( \Tilde{\boldsymbol{Z}}'(s), s \right) \diff s \right) \boldsymbol{\nu}_{j}' , \quad t \in [0,T]
    \end{equation}
    with 
    \begin{equation}
    \label{eq:FMP_a_tilde_def}
        \Tilde{a}_j ({\boldsymbol{z}}', t) := \Econdmu{ a_j(\boldsymbol{Z}(t)) }{ \boldsymbol{Z}'(t) = {\boldsymbol{z}}', \boldsymbol{Y}(s) = \boldsymbol{y}(s) , s \leq t}
    \end{equation}
    and $\Tilde{\boldsymbol{Z}}'(0) \overset{d}{=} \boldsymbol{Z}'(0)$. Then, the distribution of $\Tilde{\boldsymbol{Z}}'(t)$ conditioned on $\{ \Tilde{\boldsymbol{Y}}(s) = \boldsymbol{y}(s), s \leq t \}$ is the same as the distribution of $\boldsymbol{Z}'(t)$ conditioned on $\left\{ \boldsymbol{Y}(s) = \boldsymbol{y}(s), s \leq t \right\}$ for any $t \in [0, T]$.
\end{theorem}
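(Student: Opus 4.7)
The plan is to mimic the \ac{CME}-based derivation used for Theorem~\ref{th:MP} (outlined in Appendix~\ref{sec:MP_proof}), but applied to the unnormalized filtering equations \eqref{eq:filtering_equation_rho}--\eqref{eq:filtering_equation_rho_jump} rather than to the forward Kolmogorov equation. Working with the unnormalized density $\rho_{\boldsymbol{y}}$ is preferable because the governing equation is linear in $\rho_{\boldsymbol{y}}$, which simplifies both the marginalization step and the subsequent uniqueness argument; the normalization identity \eqref{eq:rho_normalization_full} then transports the result to the normalized conditional distribution $\pi'_{\boldsymbol{y}}$.

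First, I would introduce the marginal unnormalized density $\rho'_{\boldsymbol{y}}(\boldsymbol{x}', t) := \sum_{\boldsymbol{x}'' \in \mathsf{X}''} \rho_{\boldsymbol{y}}(\boldsymbol{x}', \boldsymbol{x}'', t)$ and sum equation \eqref{eq:filtering_equation_rho} over $\boldsymbol{x}''$. The key algebraic step rests on the identity
\begin{equation}
\nonumber
\sum_{\boldsymbol{x}'' \in \mathsf{X}''} \rho_{\boldsymbol{y}}(\boldsymbol{x}', \boldsymbol{x}'', t) \, a_j(\boldsymbol{x}', \boldsymbol{x}'', \boldsymbol{y}(t_k)) = \Tilde{a}_j\!\left( (\boldsymbol{x}', \boldsymbol{y}(t_k))^\top, t \right) \rho'_{\boldsymbol{y}}(\boldsymbol{x}', t),
\end{equation}
which follows directly from the definition \eqref{eq:FMP_a_tilde_def} of $\Tilde{a}_j$ once one notes that the normalization constant cancels when working with the unnormalized $\rho$. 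After the shift $\boldsymbol{x}'' \mapsto \boldsymbol{x}'' - \boldsymbol{\nu}_{\boldsymbol{x}'', j}$ in the gain sums, the marginalized equation takes exactly the form of \eqref{eq:filtering_equation_rho} for a $d'$-dimensional \ac{SRN} with propensities $\Tilde{a}_j$ and stoichiometric vectors $\boldsymbol{\nu}_j'$. Reactions in $\mathcal{U}$ with vanishing $\boldsymbol{\nu}_{\boldsymbol{x}', j}$ produce identical gain and loss contributions and hence telescope out, accounting for the reduced reaction count $J' \leq J$ mentioned in Section~\ref{sec:2_MP}. Applying the same manipulation to the jump update \eqref{eq:filtering_equation_rho_jump} yields precisely the jump formula for the surrogate $\Tilde{\boldsymbol{Z}}'$ defined in \eqref{eq:FMP_model}.

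At this point, both $\rho'_{\boldsymbol{y}}$ and the unnormalized conditional density of $\Tilde{\boldsymbol{Z}}'$ solve the same linear piecewise-\ac{ODE} initial value problem with identical jump updates and matching initial data (since $\Tilde{\boldsymbol{Z}}'(0) \overset{d}{=} \boldsymbol{Z}'(0)$), so uniqueness forces them to coincide; normalization then delivers $\Tilde{\pi}'_{\boldsymbol{y}} = \pi'_{\boldsymbol{y}}$. The main obstacle will be verifying the technical prerequisites that make this rigorous: namely, that the filtering equations \eqref{eq:filtering_equation_rho}--\eqref{eq:filtering_equation_rho_jump} remain valid for the surrogate $\Tilde{\boldsymbol{Z}}'$ despite propensities $\Tilde{a}_j(\cdot, t)$ that are both time-dependent and $\boldsymbol{y}$-path-dependent, and that the non-explosivity condition \eqref{eq:non_expl_cond} transfers from $\boldsymbol{Z}$ to $\Tilde{\boldsymbol{Z}}'$. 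The latter follows from Jensen's inequality applied to \eqref{eq:FMP_a_tilde_def}, which yields $\Tilde{a}_j^2 \leq \Econdmu{a_j^2(\boldsymbol{Z}(t))}{\boldsymbol{Z}'(t), \boldsymbol{Y}(s), s \leq t}$ and thus preserves the requisite $L^2$ bound; the former is a routine adaptation of the derivations in \cite{DAmbrosio2022FFSP, Rathinam2021PFwithExactState} to time-dependent rates. A secondary subtlety is that $\Tilde{a}_j$ may be undefined at shifted arguments $\boldsymbol{x}' - \boldsymbol{\nu}_{\boldsymbol{x}', j}$ with zero probability, but in those cases the product $\rho'_{\boldsymbol{y}}(\boldsymbol{x}' - \boldsymbol{\nu}_{\boldsymbol{x}', j}, t) \Tilde{a}_j(\cdot)$ simply vanishes and can be set to zero without affecting the equation.
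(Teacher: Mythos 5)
Your proposal is correct and follows essentially the same route as the paper's proof in Appendix~\ref{sec:FMP_proof}: marginalize the unnormalized filtering equations \eqref{eq:filtering_equation_rho}--\eqref{eq:filtering_equation_rho_jump} over $\boldsymbol{x}''$, recognize the weighted sums $\sum_{\boldsymbol{x}''} a_j \,\rho_{\boldsymbol{y}}$ as $\Tilde{a}_j \cdot \rho'_{\boldsymbol{y}}$ via the definition \eqref{eq:FMP_a_tilde_def} (your key identity is exactly the paper's multiply-and-divide step), and conclude by uniqueness of the resulting linear initial value problem. Your additional remarks on transferring non-explosivity via Jensen's inequality and on validating the filtering equations for time- and path-dependent propensities address technical points the paper leaves implicit, and are a welcome refinement rather than a deviation.
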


\begin{proof}
    The proof is given in Appendix~\ref{sec:FMP_proof}.
\end{proof}

\begin{table}[ht]
    {\small
    \begin{tabular}{M{0.25\textwidth}|M{0.3\textwidth}|M{0.3\textwidth}|}
    \cline{2-3}
        & \textbf{\acf{MP}}, Theorem~\ref{th:MP} & \textbf{\acf{FMP}}, Theorem~\ref{th:FMP} \\ \hline
    \multicolumn{1}{|M{0.25\textwidth}|}{Main application} & Forward problem & Filtering problem \\ \hline
    \multicolumn{1}{|M{0.25\textwidth}|}{Preserves marginal distribution?}      & Yes \ding{52} & No \ding{56}            \\ \hline
    \multicolumn{1}{|M{0.25\textwidth}|}{Preserves marginal \textit{filtering}  distribution?}      & No \ding{56} & Yes \ding{52}            \\ \hline
    \multicolumn{1}{|M{0.25\textwidth}|}{Propensities can be estimated with}  & Samples from the unconditional distribution & Samples from the \textit{filtering} distribution \\ \hline
    \end{tabular}
    }
\caption{Comparison of the presented \ac{MP} theorems.}
\label{tab:MP_FMP_comparison}
\end{table}

Table~\ref{tab:MP_FMP_comparison} summarizes the differences between standard \ac{MP} and novel \ac{FMP} theorems. Next, we define the \ac{FMP} filtering problem as follows:
\begin{equation}
    \label{eq:pi_tilde_def}
    \tilde{\pi}'_{\boldsymbol{y}} (\boldsymbol{x}', t) := \Probcondmu{\tilde{\boldsymbol{X}}'(t) = \boldsymbol{x}' }{ \tilde{\boldsymbol{Y}}(s) = \boldsymbol{y}(s), s \leq t } .
\end{equation}
Theorem~\ref{th:FMP} guarantees that $\tilde{\pi}'_{\boldsymbol{y}} = \pi'_{\boldsymbol{y}}$. In other words, the solution of the filtering problem for the \ac{FMP} process $\tilde{\boldsymbol{Z}}$ is the same as \eqref{eq:filtering_problem_margin}. 

Note that in \eqref{eq:FMP_a_tilde_def} condition $\boldsymbol{Y}(t)=\boldsymbol{y}(t)$ intersects with condition $\boldsymbol{Z}'(t)=\boldsymbol{z}'$, therefore we can treat \ac{FMP} propensities $\Tilde{a}_j(\boldsymbol{z}', t) = \Tilde{a}_j(\boldsymbol{x}', \boldsymbol{y}, t)$ as functions of only $(\boldsymbol{x}', t)$. 

Ensuring the consistency of the estimation with the \ac{FMP} entails additional challenges compared to the \ac{UMP} filter. In the \ac{CMP} filter, we need to solve a filtering problem for the propensities in \eqref{eq:FMP_a_tilde_def}, as the \ac{FMP} requires observations $\{ \boldsymbol{y}(s), s \leq t \}$ to estimate the projected propensities $\tilde{a}_j(t)$. In this work, we use the \ac{PF} introduced in Section~\ref{subsubsec:PF} to estimate projected propensities:
\begin{equation}
\label{eq:a_tile_est}
    \Tilde{a}_j^M (\boldsymbol{x}', t) = \frac{\sum\limits_{k=1}^{M} w_k(t) 1_{\{ {\boldsymbol{V}}_k'(t) = {\boldsymbol{x}}' \}} a_j({\boldsymbol{V}}_k(t)) }{ \sum\limits_{k=1}^{M} w_k(t) 1_{\{ {\boldsymbol{V}}_k'(t) = {\boldsymbol{x}}' \}} },
\end{equation}
where $\left( \boldsymbol{V}_k, w_k \right)_{k=1}^{M}$ are particles for the full-dimensional process $\boldsymbol{Z}$. In practice, the denominator in \eqref{eq:a_tile_est} can be zero for some $(\boldsymbol{x}', t)$, therefore we use linear extrapolation on $\boldsymbol{x}'$ in such cases. For numerical stability, one can also reject estimates of \ac{PF} if the denominator is close to zero.

Algorithm~\ref{alg:FMP_FFSP} provides the general scheme of the \ac{CMP} filter based on the \ac{FMP} Theorem~\ref{th:FMP}.

\begin{algorithm}[ht]
\small
\caption{\ac{CMP} Filter}
\label{alg:FMP_FFSP}
\begin{algorithmic}[1]
    \REQUIRE Initial distribution $\pi(\cdot, 0)$ according to \eqref{eq:pi0_def}, observations: jump times $t_1, \dots, t_N$ and values $\boldsymbol{y}(t_1), \dots, \boldsymbol{y}(t_N)$, sample size $M$, truncated state space $\mathsf{X}'_N$ for $\Tilde{\boldsymbol{X}}'$
    \STATE Sample $\boldsymbol{V}_1(0), \dots, \boldsymbol{V}_M(0)$ according to $\pi(\cdot, 0)$, set $w_1(0) = \dots = w_M(0) = 1$
    \FOR{$k \in \{ 0 \dots n \}$}
        \STATE Simulate $\{ \boldsymbol{V}_i(t), w_i \}_{i=1}^{M}$ for $t \in [t_k, t_{k+1}]$ with the \ac{PF} (Section~\ref{subsubsec:PF}) for the full-dimensional \ac{SRN} $\boldsymbol{Z}$ 
        \STATE Resample $\{\boldsymbol{V}_i(t_{k+1})\}_{i=1}^{M}$ according to weights $\{w_i(t_{k+1})\}_{i=1}^{M}$, set all $w_i(t_{k+1}) := 1$.
        \STATE Estimate $\{\Tilde{a}_j (\cdot, t) \}_{j=1}^{J'}$ for $t \in [t_k, t_{k+1}]$ for the \ac{FMP}-\ac{SRN} $\Tilde{\boldsymbol{Z}}$ with the pointwise \ac{PF} estimates and linear in $\boldsymbol{x}'$ extrapolation
        \STATE Compute $\Tilde{\rho}_{\boldsymbol{y}}'(\cdot, t)$  for the \ac{FMP}-\ac{SRN} $\Tilde{\boldsymbol{Z}}$ for $t \in [t_k, t_{k+1})$ by applying \ac{FFSP} to \eqref{eq:filtering_equation_rho}
        \STATE Compute $\Tilde{\rho}_{\boldsymbol{y}}'(\cdot, t)$  for the \ac{FMP}-\ac{SRN} $\Tilde{\boldsymbol{Z}}$ for $t = t_{k+1}$ by applying \ac{FFSP} to \eqref{eq:filtering_equation_rho_jump}
        \STATE Compute $\Tilde{\pi}_{\boldsymbol{y}}'(\cdot, t)$ for $t \in [t_k, t_{k+1}]$ by normalizing  $\Tilde{\rho}_{\boldsymbol{y}}'(\cdot, t)$ according to  \cite{DAmbrosio2022FFSP}
    \ENDFOR
\end{algorithmic}
\end{algorithm}

The presented filter is a combination of two known filtering algorithms: the \ac{PF} and \ac{FFSP}. The \acf{CMP} filter  exploits the advantages of both. Instead of employing the \ac{PF} to estimate the conditional distribution directly (according to \eqref{eq:PF_weighed_avg}), it estimates the \ac{FMP} propensities. This replacement of the estimated function for the \ac{PF} could lower the variance and allow fewer particles to control the error compared to applying the \ac{PF} to the entire filtering problem, particularly when estimating rare events (e.g., the tails of the conditional distribution as shown in Section~\ref{subsec:linear_cascade}). For the \ac{FFSP}, the dimensionality of the state space is lowered, significantly reducing computational complexity compared to applying \ac{FFSP} to the full-dimensional filtering problem. Therefore, one can consider the presented \ac{CMP} filter as a variance reduction technique for the \ac{PF}. In this context, the propensities $\Tilde{a}_j$ are treated as auxiliary variables given by the expectations with additional conditioning on $\boldsymbol{Z}'(t)$, lowering the variance.

\begin{remark}
\label{remark:FMP_PF_refining}
    After applying the \ac{PF} in Algorithm~\ref{alg:FMP_FFSP}, one can address the original filtering problem with sampled particles $\{ \boldsymbol{V}_i(t), w_i \}_{i=1}^{M}$ and stop the computation if the obtained accuracy is satisfactory. In this sense, constructing $\tilde{\boldsymbol{Z}}'$ and applying \ac{FFSP} are refining steps for the \ac{PF}. However, our numerical experiments show that the additional \ac{FFSP} step is beneficial, especially when estimating the tails of the filtering distribution, as shown in Figure~\ref{fig:linear_cascade_log_distr}. 
\end{remark}

\subsection{Error Analysis}
\label{subsec:FMP_error_analysis}

This section provides an error analysis of the \ac{CMP} filter (Algorithm~\ref{alg:FMP_FFSP}). This section relabels $\pi'_{\boldsymbol{y}}$ as $\pi$ to simplify the notation because all \acp{PMF} used in this section are conditioned on the same trajectory $\boldsymbol{y}$ and marginalized to the species corresponding to $\boldsymbol{X}'$. Moreover, let $\{ \Tilde{a}_j^M \}_{j=1}^{J'}$ be the \ac{PF} estimator of the \ac{FMP} propensities $\{ \Tilde{a}_j \}_{j=1}^{J'}$ based on $M$ particles and $\Tilde{\boldsymbol{Z}}'^M$ be the approximation of the \ac{FMP}-\ac{SRN} obtained by replacing the propensities $\Tilde{a}_j$ with the estimates $\Tilde{a}_j^M$ \eqref{eq:a_tile_est}. 

Algorithm~\ref{alg:FMP_FFSP} returns an approximation of $\pi$ based on the following input parameters: the number of particles $M$, truncated state space $\mathsf{X}'_N$ for the \ac{FFSP}, and step size $\Delta t$ for the numerical \ac{ODE} solver. To investigate how these parameters affect the accuracy of the approximation, we introduce the following auxiliary filtering problems:
\begin{itemize}
    \item Let
        $\pi (\boldsymbol{x}', t) = \Probcondmu{\boldsymbol{X}'(t) = \boldsymbol{x}'}{ \boldsymbol{Y}(s) = \boldsymbol{y}(s), s \leq t }$ be the solution to the original marginal filtering problem \eqref{eq:filtering_problem_margin}.
    \item Let
        $\Tilde{\pi} (\boldsymbol{x}', t) = \Probcondmu{\Tilde{\boldsymbol{X}}'(t) = \boldsymbol{x}'}{ \Tilde{\boldsymbol{Y}}(s) = \boldsymbol{y}(s), s \leq t }$ be the solution to the filtering  problem for the process $\Tilde{\boldsymbol{Z}}' = \begin{bmatrix} \Tilde{\boldsymbol{X}}' \\  \Tilde{\boldsymbol{Y}} \end{bmatrix}$ .
    \item Let
        $\Tilde{\pi}^{M} (\boldsymbol{x}',t) =  \Probcondmu{\Tilde{\boldsymbol{X}}'^M(t) = \boldsymbol{x}'}{ \Tilde{\boldsymbol{Y}}^M(s) = \boldsymbol{y}(s), s \leq t }$ be the solution to the filtering  problem for the process $\Tilde{\boldsymbol{Z}}'^M = \begin{bmatrix} \Tilde{\boldsymbol{X}}'^M \\  \Tilde{\boldsymbol{Y}}^M \end{bmatrix}$ .
        \item Let 
        $\Tilde{\pi}^{M}_{FFSP} $ be the \ac{FFSP} approximation of $\Tilde{\pi}^{M}$ with the truncated state space $\mathsf{X}'_{N}$.
    \item Let 
        $\Tilde{\pi}^{M, \Delta t}_{FFSP} $ be the approximation of $\Tilde{\pi}^{M}_{FFSP}$ obtained as a numerical solution of the \ac{FFSP} system using discretization with the time step $\Delta t$.
\end{itemize}

Next, the total error is decomposed as follows:

\begin{align*}
    \abs{ \pi(\boldsymbol{x}',t) - \Tilde{\pi}^{M, \Delta t}_{FFSP}(\boldsymbol{x}',t)} &\leq \underbrace{\abs{\pi(\boldsymbol{x}',t) - \Tilde{\pi}(\boldsymbol{x}',t)}}_{\text{Model reduction error}} 
    + \underbrace{\abs{\Tilde{\pi}(\boldsymbol{x}',t) - \Tilde{\pi}^{M} (\boldsymbol{x}', t)}}_{\text{Projection error}} \\
    &+ \underbrace{\abs{\Tilde{\pi}^{M} (\boldsymbol{x}',t) - \Tilde{\pi}^{M}_{FFSP} (\boldsymbol{x}',t)}}_{\text{Truncation error}} \\
    &+ \underbrace{\abs{ \Tilde{\pi}^{M}_{FFSP} (\boldsymbol{x}',t) - \Tilde{\pi}^{M, \Delta t}_{FFSP} (\boldsymbol{x}',t)}}_{\text{\ac{ODE} solver error}} .
\end{align*}

According to Theorem~\ref{th:FMP}, the model reduction error $\abs{\pi(\boldsymbol{x}',t) - \Tilde{\pi}(\boldsymbol{x}',t)}$ is zero.

The truncation error can be controlled by including more states in the corresponding \ac{FFSP} system. The \ac{ODE} solver error depends on the selected numerical method and can be controlled by the time step $\Delta t$. These errors can be reduced to the desired tolerance without substantial computational cost because the dimensionality of the hidden space is low after projection.

The projection error $\abs{\Tilde{\pi}(\boldsymbol{x}',t) - \Tilde{\pi}^{M} (\boldsymbol{x}', t)}$ depends on the number of particles $M$ to approximate the \ac{FMP} propensities $\Tilde{a}_1, \dots, \Tilde{a}_{J'}$. Even for a fixed trajectory $\boldsymbol{y}([0,T])$,  $\Tilde{\pi}^M(\boldsymbol{x}', t)$ is a random variable because it depends on $M$ random particles. As $\Tilde{a}_j^M$ is a \ac{PF} estimator, it converges to $\Tilde{a}_j$ with a rate of $O(M^{-1/2})$. Some technical assumptions are necessary to demonstrate the same  order of convergence for $\Tilde{\pi}^M$.

Similarly to the assumption \eqref{eq:assumption_bound_propens} for the full-dimensional \ac{SRN}, we assume that all propensities of the \ac{FMP}-\ac{SRN} are bounded. That is, there exists a function $\Tilde{C}_2: \mathsf{Y} \to \mathbb{R}_{\geq 0}$ such that, for any $t \in [0, T]$, the following holds:
\begin{equation}
\label{eq:assumption_bound_propens_FMP} 
    \sup_{\boldsymbol{x'} \in \mathsf{X}'} \sum_{j = 1}^{J'} \Tilde{a}_j \left( \boldsymbol{x}', \boldsymbol{y}, t \right) \leq C_2 (\boldsymbol{y}).
\end{equation}
All propensities should be bounded, not only observed ones as in \eqref{eq:assumption_bound_propens}.

The estimation of projection error reduces to a sensitivity analysis of the filtering problem to perturbations in the propensity functions. In the following, we state a general result for arbitrary $\varepsilon$-perturbed propensities $a^{\varepsilon}$ in Theorem~\ref{th:sensitivity} and then apply it for the \ac{CMP} filter with propensities estimated with \ac{PF} in Corollary~\ref{corollary:FMP_error_PF}.

\begin{theorem}[Sensitivity of the filtering problem for \acp{SRN}]
\label{th:sensitivity}
    Let $\Tilde{a}_j^{\varepsilon}(\boldsymbol{z}', t)$ be approximations of propensities $\Tilde{a}_j(\boldsymbol{z}', t)$, satisfying
    \begin{equation}
        \label{eq:a_tilde_error_eps}
        \Emu{\abs{ \Tilde{a}_j(\boldsymbol{z}', t) - \Tilde{a}_j^{\varepsilon}(\boldsymbol{z}', t) }} \leq \varepsilon, \quad j = 1, \dots, J'
    \end{equation}
    for all $\boldsymbol{z}' \in \mathsf{Z}'$ and $t \in [0, T]$. 
    Then, under the assumptions \eqref{eq:non_expl_cond} and \eqref{eq:assumption_bound_propens_FMP}, for all $t \in [0, T]$ 
    \begin{equation}
    \label{eq:projection_error_pi_eps}
        \Emu{ \sum_{\boldsymbol{x}' \in \mathsf{X}'} \abs{ \Tilde{\pi}(\boldsymbol{x}', t) - \Tilde{\pi}^{\varepsilon}(\boldsymbol{x}', t) }} =  O(\varepsilon),
    \end{equation}
    where $\Tilde{\pi}^{\varepsilon}(\boldsymbol{x}', t)$ is a solution of the filtering problem for the \ac{SRN} with propensities $\Tilde{a}_j^{\varepsilon}(\boldsymbol{z}', t)$.
\end{theorem}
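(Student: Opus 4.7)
The plan is to analyze the unnormalized filtering equations \eqref{eq:filtering_equation_rho}--\eqref{eq:filtering_equation_rho_jump} rather than the normalized ones, since the $\rho$-equations are linear in $\rho$. Let $\Tilde{\rho}, \Tilde{\rho}^{\varepsilon}$ denote the unnormalized densities associated with $\Tilde{\pi}, \Tilde{\pi}^{\varepsilon}$, and set $e(\boldsymbol{x}', t) := \Tilde{\rho}(\boldsymbol{x}', t) - \Tilde{\rho}^{\varepsilon}(\boldsymbol{x}', t)$. Since $\Tilde{\rho}$ is deterministic given $\boldsymbol{y}$ while $\Tilde{\rho}^{\varepsilon}$ is random through $\Tilde{a}_j^{\varepsilon}$, I would derive a pathwise bound on $\|e(\cdot,t)\|_1 := \sum_{\boldsymbol{x}'} |e(\boldsymbol{x}', t)|$, take expectation using \eqref{eq:a_tilde_error_eps}, and finally convert the bound into \eqref{eq:projection_error_pi_eps} through the normalization map $\rho \mapsto \pi = \rho/\sum \rho$.

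On each inter-jump interval $(t_k, t_{k+1})$, I would subtract the two ODEs and decompose every bilinear term as $\Tilde{a}_j \Tilde{\rho} - \Tilde{a}_j^{\varepsilon} \Tilde{\rho}^{\varepsilon} = (\Tilde{a}_j - \Tilde{a}_j^{\varepsilon}) \Tilde{\rho} + \Tilde{a}_j^{\varepsilon} e$, isolating a propagation term (proportional to $e$) from a forcing term (proportional to the propensity perturbation). Summing $|\cdot|$ over $\boldsymbol{x}'$ and using the boundedness assumption \eqref{eq:assumption_bound_propens_FMP} to control $\sum_j \Tilde{a}_j^{\varepsilon} \leq C_2(\boldsymbol{y}) + O(\varepsilon)$ yields the differential inequality
\begin{equation*}
    \frac{\diff}{\diff t} \|e(\cdot, t)\|_1 \leq 2 C_2(\boldsymbol{y}(t_k)) \, \|e(\cdot, t)\|_1 + \sum_{j=1}^{J'} \sum_{\boldsymbol{x}'} \left| \Tilde{a}_j - \Tilde{a}_j^{\varepsilon} \right|(\boldsymbol{x}', t) \, \Tilde{\rho}(\boldsymbol{x}', t).
\end{equation*}
Gronwall's inequality then gives an integral bound, and taking $\Emu{\cdot}$ together with Fubini's theorem and \eqref{eq:a_tilde_error_eps} turns the expected forcing into $O(\varepsilon)$, using that $\sum_{\boldsymbol{x}'} \Tilde{\rho}(\boldsymbol{x}', s) \leq 1$ for all $s \in [0,T]$ since the total unnormalized mass is the trajectory likelihood.

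The jump map \eqref{eq:filtering_equation_rho_jump} admits the same additive decomposition, giving an $\ell^1$ inequality of the form $\|e(\cdot, t_k)\|_1 \leq C_2(\boldsymbol{y}(t_{k-1})) \|e(\cdot, t_k^-)\|_1 + R_k$ with $\Emu{R_k} = O(\varepsilon)$. Iterating the inter-jump Gronwall estimate across all $N$ jumps in $[0, T]$ and compounding the multiplicative factors yields $\Emu{\|e(\cdot, t)\|_1} \leq K(T, C_2(\boldsymbol{y}), N) \, \varepsilon$ for every $t \in [0, T]$.

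The main obstacle is the final step, transferring the bound from the unnormalized $\Tilde{\rho}$ to the normalized $\Tilde{\pi}$. Writing $\Tilde{S} := \sum_{\boldsymbol{x}'} \Tilde{\rho}$ and $\Tilde{S}^{\varepsilon} := \sum_{\boldsymbol{x}'} \Tilde{\rho}^{\varepsilon}$, the identity $\Tilde{\pi} - \Tilde{\pi}^{\varepsilon} = e/\Tilde{S}^{\varepsilon} - \Tilde{\pi} \, (\Tilde{S}^{\varepsilon} - \Tilde{S})/\Tilde{S}^{\varepsilon}$ yields $\|\Tilde{\pi} - \Tilde{\pi}^{\varepsilon}\|_1 \leq 2 \|e\|_1 / \Tilde{S}^{\varepsilon}$. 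This division requires a uniform lower bound on $\Tilde{S}^{\varepsilon}(t)$, which is not automatic because the observation likelihood can decay over long horizons. I would either impose positivity of the trajectory likelihood as an additional assumption on the observed $\boldsymbol{y}$, or restrict to the high-probability event $\{\Tilde{S}^{\varepsilon}(t) \geq s_{\min}(\boldsymbol{y}) > 0\}$ and absorb the complementary contribution using the already-established expected $\ell^1$ control on $e$ together with a concentration argument for $\Tilde{a}_j^{\varepsilon}$ around $\Tilde{a}_j$. Combining the two estimates delivers the conclusion \eqref{eq:projection_error_pi_eps}.
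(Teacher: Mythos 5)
Your route is genuinely different from the paper's and is essentially viable. The paper works directly with the \emph{normalized} filtering equations, written in operator form $\frac{\mathrm{d}}{\mathrm{d}t}\Tilde{\pi} = \mathcal{A}(t)\Tilde{\pi} + \langle\alpha(t),\Tilde{\pi}\rangle\,\Tilde{\pi}$ between jumps and $\Tilde{\pi}(t_k)=\mathcal{B}(t_k^-)\Tilde{\pi}(t_k^-)/\|\mathcal{B}(t_k^-)\Tilde{\pi}(t_k^-)\|_1$ at jumps, and runs an induction over the jump times: on each interval it splits the linear and quadratic terms by the triangle inequality, uses $\|\mathcal{A}\|_1\le 2C_2$, $\|\Tilde{\pi}\|_1=\|\Tilde{\pi}^{\varepsilon}\|_1=1$ and Gr\"onwall, and at jump times arranges the algebra so that only the \emph{deterministic} quantity $\|\mathcal{B}(t_k^-)\Tilde{\pi}(t_k^-)\|_1$, never its random perturbed counterpart, appears in a denominator. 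You instead exploit the linearity of the unnormalized $\Tilde{\rho}$-equations and defer all normalization to one final step; your inter-jump decomposition $\Tilde{a}_j\Tilde{\rho}-\Tilde{a}_j^{\varepsilon}\Tilde{\rho}^{\varepsilon}=(\Tilde{a}_j-\Tilde{a}_j^{\varepsilon})\Tilde{\rho}+\Tilde{a}_j^{\varepsilon}e$ plus Gr\"onwall and the analogous jump recursion are sound and arguably cleaner than the paper's nonlinear bookkeeping. Two points need repair. First, $\sum_{\boldsymbol{x}'}\Tilde{\rho}(\boldsymbol{x}',s)\le 1$ is false in general: the jump update \eqref{eq:filtering_equation_rho_jump} multiplies the total unnormalized mass by averaged jump propensities, which can exceed one; you must replace the bound $1$ by a trajectory-dependent constant (roughly $\prod_k C_1(\boldsymbol{y}(t_{k-1}))$), which is harmless for a fixed trajectory with finitely many jumps. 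Second, the normalization gap you flag is closable without extra assumptions or genuine concentration: the true mass $\Tilde{S}(t)$ admits a positive lower bound $s_{\min}(\boldsymbol{y})$ on $[0,T]$ (between jumps it decays at most like $e^{-C_2(t-t_k)}$ by \eqref{eq:assumption_bound_propens_FMP}, and each of the finitely many observed jumps has positive conditional intensity --- the same positivity the paper implicitly relies on for its factors $1/\|\mathcal{B}\Tilde{\pi}\|_1$); since $|\Tilde{S}^{\varepsilon}-\Tilde{S}|\le\|e\|_1$, on the event $\{\|e\|_1\le s_{\min}/2\}$ your identity gives $\|\Tilde{\pi}-\Tilde{\pi}^{\varepsilon}\|_1\le 4\|e\|_1/s_{\min}$, while on the complement $\|\Tilde{\pi}-\Tilde{\pi}^{\varepsilon}\|_1\le 2$ and Markov's inequality applied to your expected $\ell^1$ bound on $e$ makes that contribution $O(\varepsilon)$ as well. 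With those two fixes your argument delivers \eqref{eq:projection_error_pi_eps}; what it buys is linearity of the evolution, at the price of concentrating all the normalization difficulty (and the dependence on the trajectory likelihood) into the final division.
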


\begin{proof}
    Appendix~\ref{sec:FMP_error_proof} provides the proof.
\end{proof}

As an immediate consequence of this theorem, the projection error rate of the \ac{CMP} filter under the assumption on the convergence rate of the \ac{PF}.

\begin{corollary}[Projection error in the \ac{MP} filter]
\label{corollary:FMP_error_PF}
     Let \eqref{eq:non_expl_cond} and \eqref{eq:assumption_bound_propens_FMP} hold. Suppose that 
     \begin{equation}
         \label{eq:a_tilde_error_M}
        \Emu{\abs{ \Tilde{a}_j(\boldsymbol{z}', t) - \Tilde{a}_j^{M}(\boldsymbol{z}', t) }} \leq O(M^{-1/2}), \quad j = 1, \dots, J'
     \end{equation}
    for all $\boldsymbol{z}' \in \mathsf{Z}'$ and $t \in [0, T]$. Then 
    \begin{equation}
    \label{eq:projection_error_pi}
        \Emu{ \sum_{\boldsymbol{x}' \in \mathsf{X}'} \abs{ \Tilde{\pi}(\boldsymbol{x}', t) - \Tilde{\pi}^M(\boldsymbol{x}', t) }} =  O(M^{-1/2}),
    \end{equation}
    for all $t \in [0, T]$.
\end{corollary}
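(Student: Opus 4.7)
The plan is to obtain Corollary~\ref{corollary:FMP_error_PF} as an essentially direct application of the sensitivity result of Theorem~\ref{th:sensitivity}, since the only new input is the concrete convergence rate \eqref{eq:a_tilde_error_M} of the \ac{PF} estimator for the \ac{FMP} propensities. The idea is to identify the perturbed propensities $\tilde{a}_j^\varepsilon$ appearing in Theorem~\ref{th:sensitivity} with the \ac{PF} approximations $\tilde{a}_j^M$ defined in \eqref{eq:a_tile_est}, and to take the perturbation parameter to be $\varepsilon = \varepsilon(M) := C\,M^{-1/2}$, where $C$ is the implicit constant coming from the assumption \eqref{eq:a_tilde_error_M}. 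Under this identification, the approximate filter $\tilde{\pi}^\varepsilon$ built from $\tilde{a}_j^\varepsilon$ in Theorem~\ref{th:sensitivity} is exactly the conditional \ac{PMF} $\tilde{\pi}^M$ of the process $\tilde{\boldsymbol{Z}}'^M$ introduced in Section~\ref{subsec:FMP_error_analysis}.

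First I would verify that the three hypotheses of Theorem~\ref{th:sensitivity} hold in the current setting: (i) the non-explosivity condition \eqref{eq:non_expl_cond} on the underlying \ac{SRN} $\boldsymbol{Z}$, which is explicitly assumed in the corollary; (ii) the boundedness condition \eqref{eq:assumption_bound_propens_FMP} on the \ac{FMP} propensities $\{\tilde{a}_j\}$, also assumed; and (iii) the perturbation bound \eqref{eq:a_tilde_error_eps}, which is provided uniformly in $(\boldsymbol{z}',t)$ by the hypothesis \eqref{eq:a_tilde_error_M} with $\varepsilon = C\,M^{-1/2}$. Next I would apply Theorem~\ref{th:sensitivity} with this choice of $\varepsilon$ to obtain
\begin{equation*}
\Emu{\sum_{\boldsymbol{x}' \in \mathsf{X}'} \abs{\tilde{\pi}(\boldsymbol{x}',t) - \tilde{\pi}^M(\boldsymbol{x}',t)}} = O(\varepsilon) = O(M^{-1/2}),
\end{equation*}
which is exactly the claim \eqref{eq:projection_error_pi}. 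Finally I would note that the expectation on the left is taken jointly over the randomness of both the \ac{FMP} process $\tilde{\boldsymbol{Z}}'$ and of the \ac{PF} particles used to build $\tilde{a}_j^M$, so the conclusion is consistent with the probabilistic nature of $\tilde{\pi}^M$ (being itself a random \ac{PMF} through the particles).

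The main obstacle I anticipate is not algebraic but conceptual: I have to be careful that Theorem~\ref{th:sensitivity}, whose hypothesis \eqref{eq:a_tilde_error_eps} is phrased for \emph{arbitrary} $\varepsilon$-perturbations, genuinely accommodates \emph{random} perturbations $\tilde{a}_j^M$ that are themselves measurable functions of the \ac{PF} particles. This is harmless once one notes that the bound in the theorem involves only the $L^1$ distance $\Emu{|\tilde{a}_j - \tilde{a}_j^M|}$, which absorbs the auxiliary randomness; nevertheless I would make this explicit in the proof so that the application is watertight. A secondary, milder point is to ensure that the $O(M^{-1/2})$ rate in \eqref{eq:a_tilde_error_M} is uniform in $(\boldsymbol{z}',t)$ on the finite range $\mathsf{X}'\times[0,T]$ actually visited, which, together with \eqref{eq:assumption_bound_propens_FMP}, guarantees that the implicit constant in the resulting $O(M^{-1/2})$ bound for the filter error does not blow up when summing over $\boldsymbol{x}' \in \mathsf{X}'$; this follows from the finite-state argument already used in the proof of Theorem~\ref{th:sensitivity}.
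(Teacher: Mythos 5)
Your proposal is correct and matches the paper's argument: the corollary is stated there as an immediate consequence of Theorem~\ref{th:sensitivity}, obtained exactly as you do by identifying $\tilde{a}_j^{\varepsilon}$ with the particle estimates $\tilde{a}_j^{M}$ and taking $\varepsilon = O(M^{-1/2})$, since the hypothesis \eqref{eq:a_tilde_error_eps} is already phrased as an expected $L^1$ bound of the same form as \eqref{eq:a_tilde_error_M}. Your additional remarks on the randomness of the perturbation being absorbed by the expectation and on the uniformity of the implicit constant are sensible clarifications but do not change the route.
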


The general form of Theorem~\ref{th:sensitivity} can be useful for other methods for estimating the propensities $\{\Tilde{a}_j\}_{j=1}^{J'}$. For instance, using \cite{leluc2025speeding, gerber2015sequential} could potentially lead to an error $O(M^{-p})$ with $p > 1/2$. Alternatively, one can use simpler filtering techniques (e.g., the Kalman filter) for propensities estimation and be sure that the bias of $\Tilde{\pi}^{\varepsilon}$ will not explode with respect to the bias of $\tilde{a}^{\varepsilon}_j$.

Although the order of error for the \ac{CMP} filter is the same as for the \ac{PF}, the constant in front of $M^{-1/2}$ is expected to be smaller for \ac{MP}. As we discussed earlier, the \ac{PF} is inefficient when estimating distributions due to the high variance of the indicator function. Whereas in the \ac{CMP} filter, the \ac{PF} estimates the projected propensities, yielding a smaller variance of the resulting estimator, as we show empirically in the next section (see Figures~\ref{fig:linear_cascade_log_distr}~and~\ref{fig:linear_cascade_convergence}).
\begin{remark}
    To the best of our knowledge, no results exist on the convergence of the specific version of \ac{PF} described in Section~\ref{subsubsec:PF}. However, for a general \ac{PF} with discrete observations, one can show the error decays as $O(M^{-1/2})$ if estimating a single quantity of interest \cite{Chopin2004CLTforSeqMC, crisan2002survey}.     
\end{remark}
The assumption \eqref{eq:a_tilde_error_M} might be too restrictive, as it requires infinitely many estimators ($\Tilde{a}^M(\boldsymbol{z}',t)$ for all states $\boldsymbol{z}'$ and times $t$) to be bounded by $O(M^{-1/2})$. In practice, we only need to estimate $\Tilde{a}(\boldsymbol{z}',t)$ for a finite number of states $\boldsymbol{z}'$ and discretized time points that will be included in the \ac{FFSP} equations.

    \section{Numerical Examples}
        \label{sec:4_Examples}
        This section presents two numerical examples of solving the marginal filtering problem \eqref{eq:filtering_problem_margin} for biochemical systems. The source code is available at \href{https://github.com/maksimchup/Markovian-Projection-in-filtering-for-SRNs}{\textit{github.com/maksimchup/Markovian-Projection-in-filtering-for-SRNs}}.

\subsection{Bistable Gene Expression Network}
\label{subsec:bistable_gene}

Consider an intracellular system with two genes \cite{Duso2018SelectedNodeSSA} sketched in Figure~\ref{fig:bistable_gene_diagram}. In an activated state, each gene can produce mRNA, which produces protein molecules. The amount of protein of each type affects the deactivation of the opposite gene. The model reactions are written as follows:
\begin{align*}
    \text{for } &i = 1,2 \quad i \neq j: \\[1.5pt]
    \text{mRNA}_i &\longleftrightarrow \emptyset \\
    \text{mRNA}_i &\longrightarrow \text{mRNA}_i + \text{protein}_i \\
    \text{protein}_i &\longrightarrow \emptyset \\
    \text{G}^\ast_j + \text{protein}_i &\longrightarrow \text{G}_j + \text{protein}_i  \\
    \text{G}^\ast_i &\longrightarrow \text{G}^\ast_i + \text{mRNA}_i \\
    \text{G}^\ast_i &\longleftrightarrow \text{G}_i,
\end{align*}
where $\text{G}^{\ast}$ and $\text{G}$ denote the activated and deactivated gene states.
For further numerical simulations, we use propensities according to the mass action kinetics \eqref{eq:mass_action} with the following reaction rates \cite{Duso2018SelectedNodeSSA}: $\theta_1 = \theta_2 = 0.1$, $\theta_3 = \theta_4 = 0.05$, $\theta_5 = \theta_6 = 5$, $\theta_7 = \theta_8 = 0.2$, $\theta_9 = \theta_{10} = 0.1$, $\theta_{11} = \theta_{12} = 1$, $\theta_{13} = \theta_{14} = 0.03$ and $\theta_{15} = \theta_{16} = 10^{-6}$.

\begin{figure}
    \centering
    \includegraphics[width=0.7\linewidth]{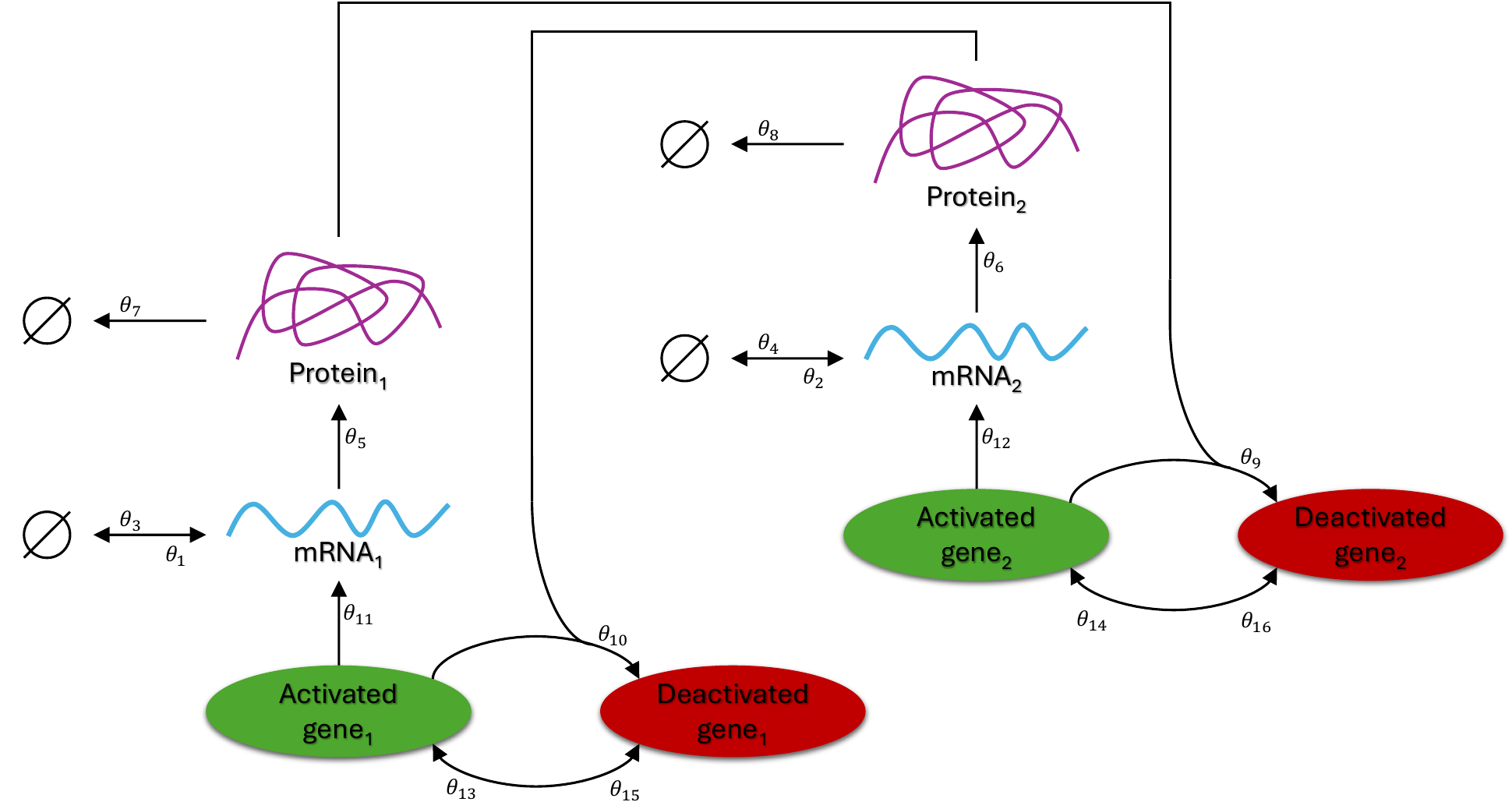}
    \caption{Reaction diagram of the bistable gene expression network (Section~\ref{subsec:bistable_gene}).}
    \label{fig:bistable_gene_diagram}
\end{figure}

Assume that the copy number of each protein is observed (i.e., the observed process $\boldsymbol{Y}$ is two-dimensional), and the goal is to estimate the conditional distribution of the amount of $\text{mRNA}_2$. We generate the observed part of the process using the \ac{SSA} (Figure~\ref{fig:bistable_gene_sol}(a)). Because we use synthetic data, the true trajectory of the hidden part is available for comparison with the corresponding conditional expectation from the solution of the filtering problem (see Figure~\ref{fig:bistable_gene_sol}(b)). However, the discrepancy in this case is explained not only by numerical error, but also by the stochastic nature of the problem itself. Determining the exact trajectory of the hidden part based on the information on the observed part is impossible.

\begin{figure}
    \centering
    \includegraphics[width=0.9\linewidth]{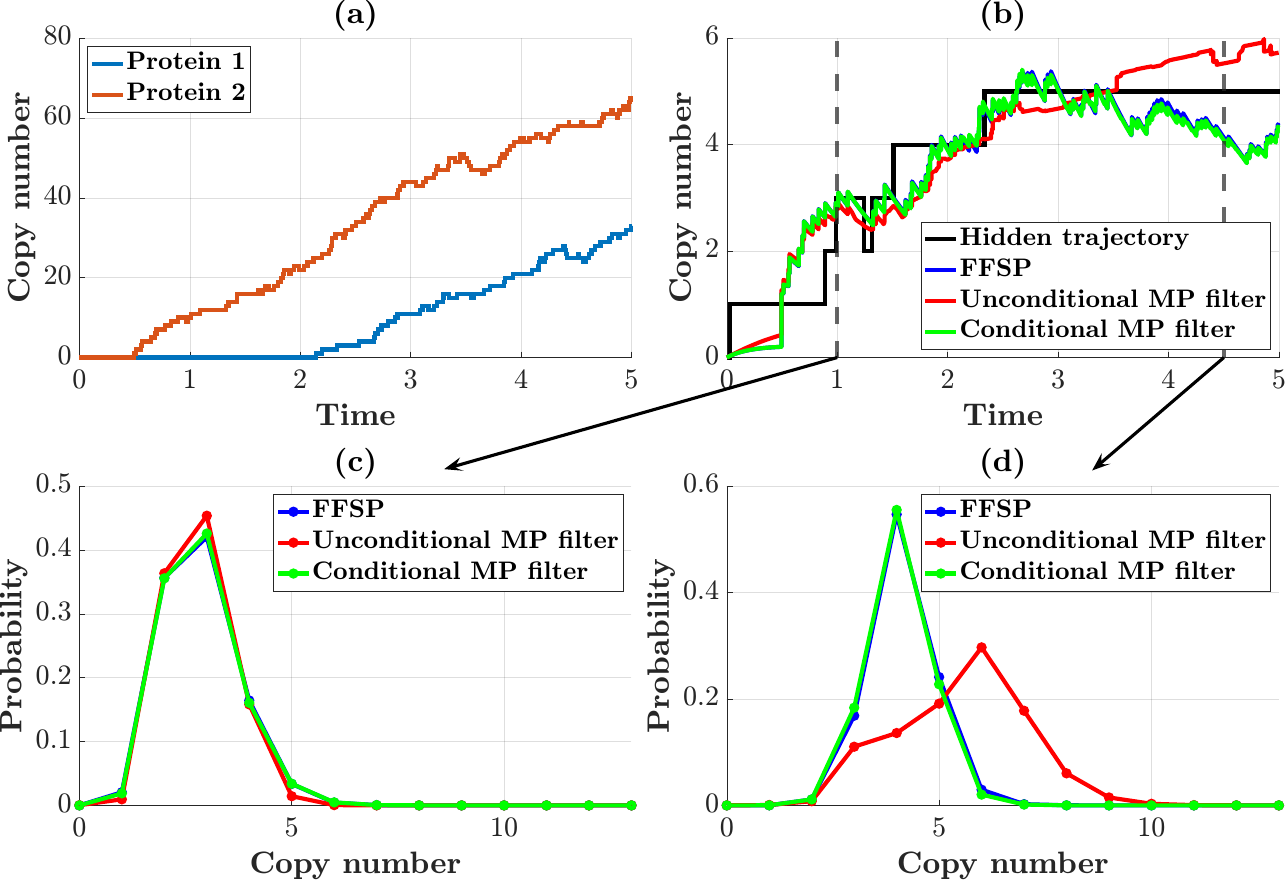}
    \caption{
    Numerical results for the bistable gene expression network (Section~\ref{subsec:bistable_gene}). The projection filters reduce the dimensionality of the hidden process from $6$ to $1$.
     (a): Observed trajectory of $\text{protein}_1$ and $\text{protein}_2$. 
     (b): Hidden trajectory of $\text{mRNA}_2$ and the corresponding estimates of its conditional expectation obtained with \ac{UMP} and \ac{CMP} filters and the \ac{FFSP} method for reference. 
     (c)--(d): Conditional distribution of $\text{mRNA}_2$ at time $t = 1$ and $t = 4.5$ obtained with the \ac{UMP} and \ac{CMP} filters and the \ac{FFSP} method for reference. 
     The \ac{UMP} has a larger error compared to the \ac{CMP} filter, which agrees with our theoretical results. 
     }
    \label{fig:bistable_gene_sol}
\end{figure}

The \ac{FFSP} solution for the full-dimensional system is utilized as a reference solution. In this model, gene states can only be 0 or 1, and the amounts of mRNA molecules are not bounded, so only mRNA$_1$ and mRNA$_2$ must be truncated when applying \ac{FFSP}. The upper bounds are set as $\text{mRNA}_1^{\text{max}} = \text{mRNA}_2^{\text{max}} = 30$, yielding $N = 2^4 \cdot 31^2 = 15 \, 376$ possible states for the full \ac{SRN}. 

For the proposed \ac{MP} filters, the projected process is three-dimensional (two observed  and one hidden species). The hidden space is still one-dimensional; thus, a significant efficiency improvement is expected compared to the full-dimensional process. With the same upper bound for mRNA, there are only $N' = 31$ hidden states. Thus, instead of $15 \, 376$ equations for the original \ac{SRN}, only $31$ should be solved for the projected \ac{SRN} (in both proposed projection filters).

\begin{table}
    \centering 
    \def\arraystretch{1.25}
    {\small
    \begin{tabular}{|M{0.3\textwidth}|M{0.15\textwidth}|M{0.19\textwidth}|M{0.19\textwidth}|} 
    \cline{2-4}
        \multicolumn{1}{c|}{} & \textbf{Full model} & \textbf{\ac{UMP} filter} & \textbf{\ac{CMP} filter} \\ \hline
        {Dimensionality of hidden space} & 6 & 1 & 1 \\ \hline
        {Number of hidden states} & $15 \, 376$ & 31 & 31 \\ \hline
        {CPU time in seconds} & 841 & 5 & 5 \\ \hline
    \end{tabular}
    }
    \caption{The computational complexity of solving the filtering equation for the bistable gene network (Section~\ref{subsec:bistable_gene}) via the \ac{FFSP} method for the full model and proposed projection methods (\ac{UMP} and \ac{CMP} filters).}
    \label{tab:bistable_network_results}
\end{table}
 
 For the proposed \ac{MP} filters, we use sample size $M=10^3$ to estimate the projected propensities based on \eqref{eq:a_bar_est} and \eqref{eq:a_tile_est}.
 
The computational complexity of solving the filtering problem using the \ac{MP} filters compared to \ac{FFSP} is summarized in Table~\ref{tab:bistable_network_results}. The projection methods reduce the dimensionality of the hidden state space from $\dim{\boldsymbol{X}} = 6$ to $\dim{\boldsymbol{X}'} = 1$, resulting in a reduction in computational time from 841 seconds (s) for the full-dimensional \ac{SRN} to an average of 5 s for the \ac{UMP} and \ac{CMP} filters (i.e., an acceleration of about 160 times). Figure~\ref{fig:bistable_gene_sol} reveals that both \ac{MP} and \ac{FMP} surrogates provide a reasonable estimate for the conditional expectation. Despite the inconsistency of the \ac{UMP} filter, we obtain a result close to the reference solution up to time $t=3$. For large $t$, we see a significant error demonstrating the inconsistency of the \ac{UMP} filter. On the other hand, the \ac{CMP} filter yields estimates that are very close to the reference solution.

\subsection{Linear Cascade}
\label{subsec:linear_cascade}
Consider a linear cascade model \cite{Gupta2021DeepCME} consisting of $d$ species $S_1, \dots, S_d$. The reactions are given by
\begin{align*}
    \text{for } &i = 1,\dots, d: \\
    S_{i-1} &\longrightarrow S_{i}, \\
    S_i &\longrightarrow \emptyset,
\end{align*}
where $S_0 = \emptyset$. A sketch of this model is presented in Figure~\ref{fig:linear_cascade_diagram}. Further numerical simulations employ propensities according to the mass action kinetics \eqref{eq:mass_action} with the following reaction rates: $\theta_1 = 10$, $\theta_i = 5$ for $i=2,\dots, d$ and $\theta_{i}=1$ for $i=d,\dots, 2d$.

\begin{figure}
    \centering
    \includegraphics[width=0.6\linewidth]{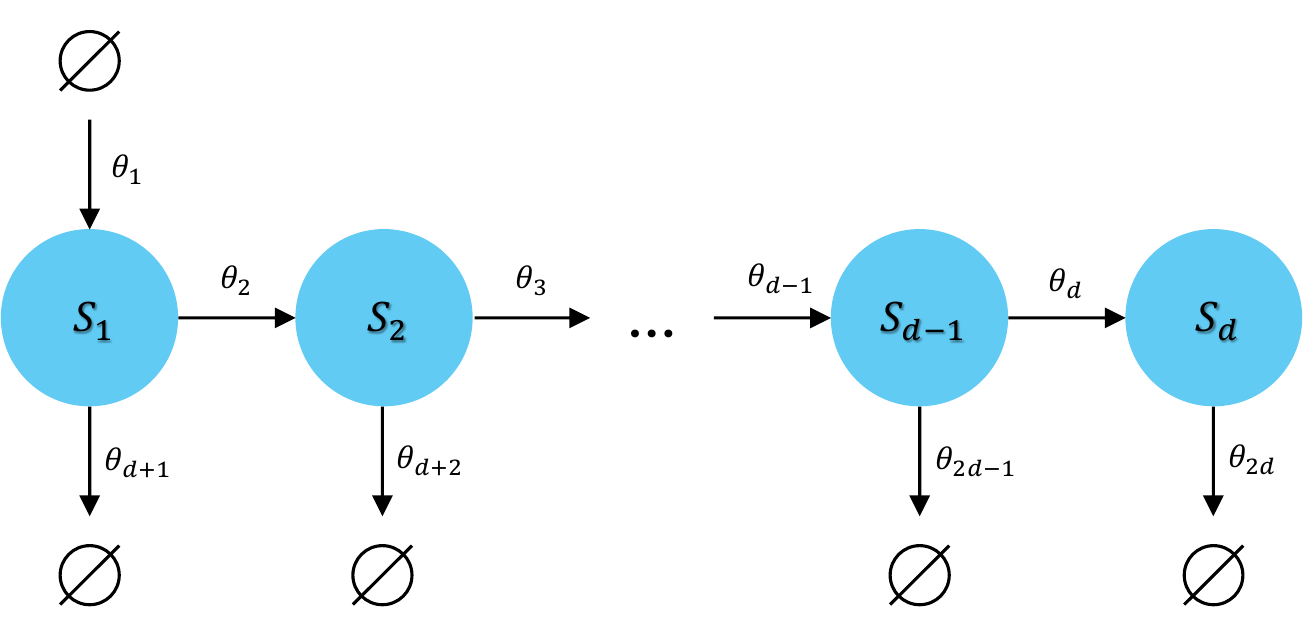}
    \caption{Reaction diagram of the bistable gene expression network (Section~\ref{subsec:linear_cascade}).}
    \label{fig:linear_cascade_diagram}
\end{figure}

Let us denote the copy number of $S_i$ at time $t$ by $Z_i(t)$ for $i = 1,\dots, d$ and consider an \ac{SRN} $\boldsymbol{Z}(t) = \left(Z_1(t), \dots, Z_d(t) \right)$. Assume that the $Z_d$ (copy number of $S_d$) is observed, and the goal is to estimate the conditional distribution of $Z_1$. As prior, the synthetic observed trajectory was simulated using the \ac{SSA}.

\begin{table}
    \centering 
    \def\arraystretch{1.25}
    {\small 
    \begin{tabular}{|M{0.3\textwidth}|M{0.15\textwidth}|M{0.19\textwidth}|M{0.19\textwidth}|} 
    \cline{2-4}
        \multicolumn{1}{c|}{} & \textbf{Full model} & \textbf{\ac{UMP} filter} & \textbf{\ac{CMP} filter} \\ \hline
        {Dimensionality of hidden space} & 7 & 1 & 1 \\ \hline
        {Number of hidden states} & $> 1.9\times 10^7$ & 11 & 11 \\ \hline
        {CPU time in seconds} & $38 \, 166$ & 4.0 & 1.9 \\ \hline
    \end{tabular}
    }
    \caption{The computational complexity of solving the filtering problem for the linear cascade model (Section~\ref{subsec:linear_cascade}) with $d=8$ species via \ac{FFSP} method for the full model and \ac{MP} filters.}
    \label{tab:linear_cascade_results}
\end{table}

To obtain a reference solution, we used the \ac{FFSP} method with a $(d-1)$-dimensional truncated state space $\mathsf{X}_N = \{0, \dots, 10 \}^{(d-1)}$, resulting in a system of $N = 11^{(d-1)}$ equations. For the \ac{UMP} and \ac{CMP} filters, the projected hidden space is one-dimensional: $\mathsf{X}'_N = \{0, \dots, 10 \}$, which yields only  $N' = 11$ equations. For the \ac{UMP} and \ac{CMP} filters, we use sample size $M=500$ to estimate the projected propensities. The computational complexity of solving the filtering problem for $d=8$ using the \ac{MP} filters compared to \ac{FFSP} is summarized in Table~\ref{tab:linear_cascade_results}.

\begin{figure}
    \centering                                 
    \includegraphics[width=0.9\linewidth]{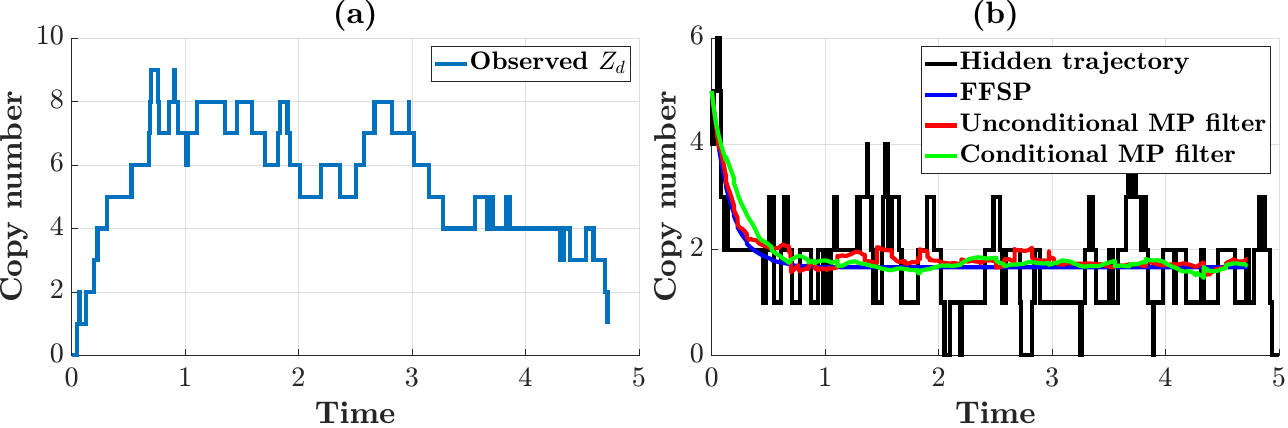}
    \caption{
    Numerical results for the linear cascade model (Section~\ref{subsec:linear_cascade}) with $d=5$ species. 
     (a): Observed trajectory of $Z_d$. 
     (b): Hidden trajectory of $Z_1$ and the corresponding estimates of its conditional expectation obtained with the \ac{UMP} filter, \ac{CMP} filter, and the \ac{FFSP} method for reference.
    }
    \label{fig:linear_cascade_means}
\end{figure}

The simulation results for $d=5$ are presented in Figure~\ref{fig:linear_cascade_means}. The reference solution shows that the estimated expectation is almost independent of the observations and rapidly reaches a nearly stationary state. This can be explained as follows: $S_1$ and $S_5$ are linked by reactions through three other species and therefore are almost independent. Due to the same reason, the additional conditioning on the observed trajectory $\{ \boldsymbol{Y}(s) = \boldsymbol{y}(s), s \leq t \} = \{ Z_d (s) = y(s), s \leq t \}$ should not significantly change the \ac{FMP} propensities \eqref{eq:FMP_a_tilde_def} from the \ac{UMP} propensities \eqref{eq:MP_a_bar_def}. At the beginning, the \ac{UMP} filter even outperforms the \ac{CMP} filter, but then it deviates more from the reference solution. Because there is no condition on $Z_d$ in the \ac{UMP} propensities, we have to extrapolate our estimates in the two-dimensional state space $(Z_1, Z_d)$, which can introduce larger errors compared to estimating \ac{FMP} propensities for which we perform extrapolation only for $Z_1$ (since $Z_d(s)$ can only be in state $y(s)$ according to \eqref{eq:FMP_a_tilde_def}).

\begin{figure}
    \centering
    \includegraphics[width=0.5\linewidth]{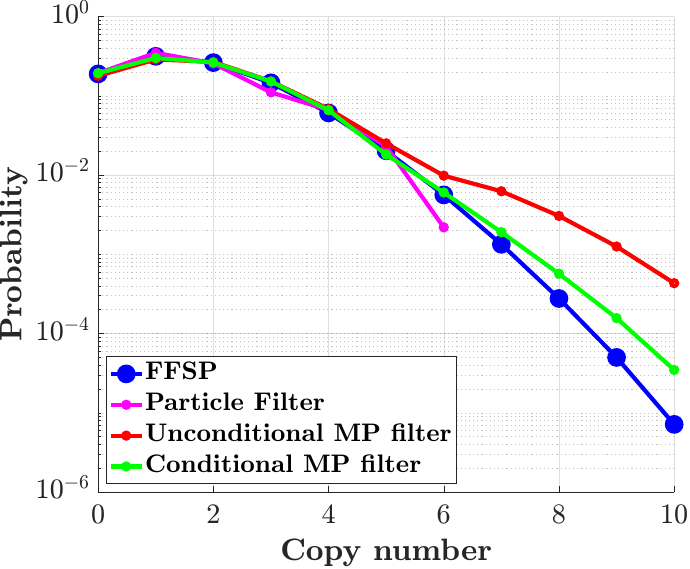}
    \caption{
    Numerical results for the $d=5$ dimensional linear cascade model (Section~\ref{subsec:linear_cascade}). Conditional \ac{PMF} of $Z_1(T)$ (in log scale), estimated with the \ac{PF}, \ac{UMP} and \ac{CMP} filters with sample size $M = 500$. The reference solution is obtained with the \ac{FFSP} method for the full model. For the \ac{PF}, no particles hit the region $\{ z_1 \geq 7 \}$, however, the \ac{CMP} filter based on the same set of particles led to a reasonable estimate of the tail probability.
    }
    \label{fig:linear_cascade_log_distr}
\end{figure}

Figure~\ref{fig:linear_cascade_log_distr} shows the difference between \ac{UMP} and \ac{CMP} filters in estimating the tails of the conditional distribution at the final time $T=5$. We also provide a \ac{PF} estimate, based on the same particles used to estimate the projected propensities in the \ac{CMP} filter. Clearly, the sample size $M=500$ is insufficient for the \ac{PF} to accurately estimate the probabilities in the tail, but applying the \ac{CMP} filter significantly improves the estimate.

For further comparison with the \ac{PF}, we consider the same five-dimensional system and the following \ac{QOI}: 
$$ 
    Q_d := \Probcondmu{Z_1(T) \geq 8}{Z_d(s) = y(s), s \leq T}.
$$ 

\begin{figure}
    \centering
    \includegraphics[width=0.5\linewidth]{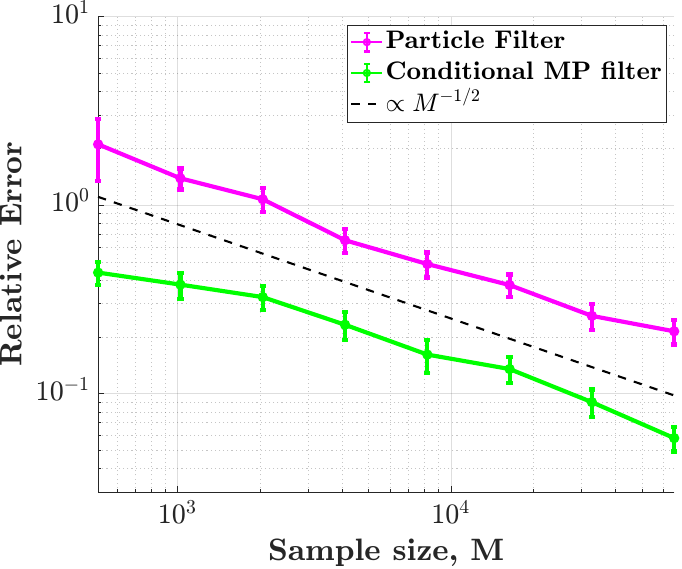}
    \caption{
    Numerical results for the linear cascade model (Section~\ref{subsec:linear_cascade}). 
    Expected relative errors in estimating $\Probcondmu{Z_1(T) \geq 8}{Z_d(s) = y(s), s \leq T}$ for $d=5$ with the \ac{PF} and \ac{CMP} filter, depending on the sample size (log-log scale). 
    Simulations were performed for fixed observed trajectories $Z_d$, and errors were averaged over 100 runs. The vertical bars show $95 \%$ confidence intervals.
    The results verify our convergence estimate for the \ac{CMP} filter (Corollary~\ref{corollary:FMP_error_PF}) and show that \ac{CMP} can be employed as a refining for the \ac{PF} (Remark~\ref{remark:FMP_PF_refining}). 
    }
    \label{fig:linear_cascade_convergence}
\end{figure}

The reference solution obtained with \ac{FFSP} is $Q^{\text{ref}}_{5} =  3.41 \times 10^{-4}$. Figure~\ref{fig:linear_cascade_convergence} presents the relative error of the \ac{PF}, \ac{UMP} and \ac{CMP} filters depending on the sample size $M$. The error of the \ac{CMP} filter is smaller than the error of \ac{PF}, confirming that the \ac{CMP} can be employed as an additional refining step for the \ac{PF} (see Remark~\ref{remark:FMP_PF_refining}). Moreover, Figure~\ref{fig:linear_cascade_convergence} shows that the convergence rate of the \ac{CMP} filter is $O(M^{-1/2})$, as derived in Section~\ref{subsec:FMP_error_analysis}. The \ac{UMP} filter has a significantly larger error and does not guarantee convergence.

\begin{figure}
    \centering
    \includegraphics[width=0.5\linewidth]{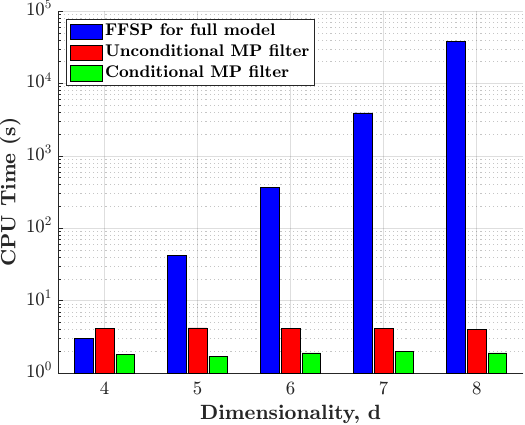}
    \caption{
     CPU times (log scale) of the \ac{UMP} filter, \ac{CMP} filter, and the \ac{FFSP} for reference, depending on the number of species in the linear cascade model (Section~\ref{subsec:linear_cascade}). 
     }
    \label{fig:linear_cascade_cpu_time}
\end{figure}

Figure~\ref{fig:linear_cascade_cpu_time} illustrates that the execution time of the \ac{FFSP} algorithm for the full-dimensional system increases exponentially as the dimensionality increases. In contrast, the time of the projection-based method does not change as the dimensionality increases. The execution time for the \ac{CMP} filter is less than for the \ac{UMP} filter because the \ac{PF} sampling involves only the hidden reactions from $\mathcal{U}$, whereas the \ac{MC} sampling involves all reactions.

    \section{Conclusions}
        \label{sec:conclusion}
        This work addressed the curse of dimensionality in the filtering problem for partially observable \acp{SRN}. Based on the \ac{FMP} theorem (Theorem~\ref{th:FMP}), we developed the \ac{CMP} filter to reduce the dimensionality (i.e., the number of species in the underlying \ac{SRN}). This approach is a modification of the standard \ac{MP} for the filtering problem. The \ac{CMP} filter is structurally identical to the unconditional one; the only difference is the additional conditioning on the observed trajectory in the expectation for the projected propensities. 

The proposed approach is to construct an \ac{SRN} with fewer species and solve the filtering problem for this network instead of the original one. This approach significantly reduces the dimensionality of the state space if the \ac{QOI} depends only on a small subset of hidden species. However, some propensities of this projected \ac{SRN} have no analytical expression and require numerical approximations. This work applies \ac{PF} to the original model to estimate the projected propensities. Using \ac{UMP} propensities estimated with the \ac{MC} methods is also possible but introduces additional errors. For the projected \ac{SRN}, we employed the \ac{FFSP} method to solve the filtering problem, demonstrating that the dimensionality reduction significantly increases its efficiency.

This work showed that applying \ac{CMP} and \ac{UMP} filters significantly reduces the computational complexity of the \ac{FFSP} method by reducing the dimensionality. In addition, the \ac{CMP} filter can be considered as a variance reduction for the \ac{PF} as we showed numerically.

The theoretical analysis demonstrated the consistency of the \ac{CMP} filter. The algorithm converges as $O(M^{-1/2})$, where $M$ is the number of particles to estimate the projected propensity functions. The numerical results confirmed the outperforming of the \ac{CMP} filter over the commonly used \ac{PF}. A more detailed analysis of the \ac{CMP} filter error, including relaxing assumptions and obtaining sharper error bounds, is an important direction for further work.

A possible direction for future work is applying \ac{FMP} (and standard \ac{MP}) to the \ac{PF}. Similarly to the \ac{FFSP} method, the \ac{PF} suffers from the curse of dimensionality due to weight degeneracy \cite{Snyder2008HighDimPF, Djuric2013HighDimPF}. The \ac{FMP} can significantly increase the efficiency of the \ac{PF}, but also requires an additional step for the propensity estimation, resulting in a two-step algorithm. The first step uses the \ac{PF} for the full model to estimate the projected propensities, and the second step employs \ac{PF} for the projected model to estimate the \ac{QOI}. The particles from the first step can also be applied for a rough estimation of the \ac{QOI}, which can be employed as a control variate. 

Another possibility for future work is to adapt \ac{FMP} for the filtering problem with noisy or discrete-time observations. In this case, the filtering equations have a different form but should also admit a linear equation for the unnormalized conditional \ac{PMF}. This linear equation allows applying the same techniques as that in the proof of Theorem~\ref{th:FMP}. Furthermore, it is also possible to incorporate parameter estimation into the filtering problem by including these parameters in the state vector, further increasing the dimensionality of the state space and making the \ac{FMP} approach even more relevant. 

Finally, one could extend the \ac{FMP} to the filtering problem for It\^o processes by deriving an equation for the marginalized conditional density by integrating both sides of the Zakai equation \cite{Zakai1969Optimal}. The idea is similar to the proof of Theorem~\ref{th:FMP} but may cause difficulties related to the continuity of the state space.

    \section*{Acknowledgments}
        This publication is based upon work supported by the King Abdullah University of Science and Technology (KAUST) Office of Sponsored Research (OSR). This work received funding from the Alexander von Humboldt Foundation through a Humboldt Professorship.

    \appendix
        \section{Unconditional Markovian Projection Filter}
\label{sec:MP_appendix}

In this appendix, we describe a filter based on a standard \ac{MP} Theorem~\ref{th:MP}. The first step is to compute the \ac{MP} propensities \eqref{eq:MP_a_bar_def}. Some of the propensities can be computed analytically, whereas others can be approximated using the \ac{MC} estimator:
\begin{equation}
\label{eq:a_bar_est}
    \bar{a}_j^M ({\boldsymbol{z}}', t) = \frac{\sum\limits_{i=1}^{M} 1_{\{ {\boldsymbol{Z}}_i'(t) = {\boldsymbol{z}}' \}} a_j({\boldsymbol{Z}}_i(t)) }{ \sum\limits_{i=1}^{M} 1_{\{ {\boldsymbol{Z}}_i'(t) = {\boldsymbol{z}}' \}} },
\end{equation}
where $\{ \boldsymbol{Z}_i \}_{i=1}^{M}$ are independent realizations of the full-dimensional process $\boldsymbol{Z}$. In practice, for some $({\boldsymbol{z}}', t)$, this estimate could be unreliable or singular due to the denominator being close to zero. In this work, we use a simple rule: if there were fewer than $10$ samples at point $(\boldsymbol{z}',t)$, then the estimate $\bar{a}_j(\boldsymbol{z}',t)$ is treated as unreliable, and the estimate \eqref{eq:a_bar_est} is replaced via linear extrapolation based on the states with reliable estimates.

The second step is solving the filtering equations for the \ac{MP}-\ac{SRN} of a lower dimensionality using the \ac{FFSP} method.

\begin{algorithm}[H]
\small
\caption{Unconditional \ac{MP} Filter}
\label{alg:MP_FFSP}
\begin{algorithmic}[1]
    \REQUIRE Initial distribution $\pi(\cdot, 0)$ according to \eqref{eq:pi0_def}, observations: jump times $t_1, \dots, t_N$ and values $\boldsymbol{y}(t_1), \dots, \boldsymbol{y}(t_N)$, sample size $M$, truncated state space $\mathsf{X}'_N$ for $\bar{\boldsymbol{X}}'$
    \STATE Sample $\boldsymbol{Z}_1(0), \dots, \boldsymbol{Z}_M(0)$ from $\mu$
    \FOR{$k \in \{ 0 \dots n \}$}
        \STATE Simulate $\{\boldsymbol{Z}_i(t)\}_{i=1}^M$ for  $t \in [t_k, t_{k+1}]$ from the full-dimensional \ac{SRN} $\boldsymbol{Z}$ using the \ac{SSA}
        \STATE Estimate $\{\bar{a}_j (\cdot, t)\}_{j=1}^{J'}$ for $t \in [t_k, t_{k+1}]$ using \eqref{eq:a_bar_est} and extrapolation for the \ac{MP}-\ac{SRN} $\bar{\boldsymbol{Z}}$ 
        \STATE Compute $\bar{\rho}_{\boldsymbol{y}}'(\cdot, t)$  for the \ac{MP}-\ac{SRN} $\bar{\boldsymbol{Z}}$ for $t \in [t_k, t_{k+1})$ by applying \ac{FFSP} to \eqref{eq:filtering_equation_rho}
        \STATE Compute $\bar{\rho}_{\boldsymbol{y}}'(\cdot, t)$  for the \ac{MP}-\ac{SRN} $\bar{\boldsymbol{Z}}$ for $t = t_{k+1}$ by applying \ac{FFSP} to \eqref{eq:filtering_equation_rho_jump}
        \STATE Compute $\bar{\pi}_{\boldsymbol{y}}'(\cdot, t)$ for $t \in [t_k, t_{k+1}]$ by normalizing  $\bar{\rho}_{\boldsymbol{y}}'(\cdot, t)$ according to  \cite{DAmbrosio2022FFSP}
    \ENDFOR
\end{algorithmic}
\end{algorithm}

Other methods can be used to estimate the projected propensities, e.g., the discrete $L^2$ regression \cite[Section~3.2]{Hammouda2023MP}, which is efficient if the shape of the projected propensity functions is known.

\section{Proof of Theorem~\ref{th:MP}}
\label{sec:MP_proof}
\begin{proof}
    The result can be derived from \cite[Theorem~3.1]{Hammouda2023MP}, but this work presents an alternative proof based on the marginalization of the \ac{CME}.
    The \ac{PMF} $p(\boldsymbol{z}, t) = \Probmu{\boldsymbol{Z}(t) = \boldsymbol{z}}$ obeys the following:
    \begin{small}
    \begin{equation}
    \label{eq:CME}
        \frac{\mathrm{d}}{\mathrm{d} t} p(\boldsymbol{z}, t) = \sum_{j=1}^{J} a_j(\boldsymbol{z}-\boldsymbol{\nu}_j) p(\boldsymbol{z}-\boldsymbol{\nu}_j, t) - \sum_{j=1}^{J} a_j(\boldsymbol{z}) p(\boldsymbol{z}, t)
    \end{equation}
    \end{small}
    with the initial condition $p(\cdot, 0)$, corresponding to the distribution $\mu$ of the random variable $\boldsymbol{Z}(0)$. The goal is to derive an equation for the probability function of the process $\boldsymbol{Z}'$: the marginal probability function $\boldsymbol{z}' \mapsto \sum\limits_{{\boldsymbol{z}}'' } p \left( \begin{bmatrix} \boldsymbol{z}' \\ {\boldsymbol{z}}'' \end{bmatrix}, t \right)$. To do so, we sum \eqref{eq:CME} over all states for $\boldsymbol{z}'' \in \mathbb{Z}^{\dim(\boldsymbol{Z}'')}$:
    \begin{small}
    \begin{align*}
        \sum_{{\boldsymbol{z}}'' } \frac{\mathrm{d}}{\mathrm{d} t} p \left( \begin{bmatrix} \boldsymbol{z}' \\ {\boldsymbol{z}}'' \end{bmatrix}, t \right) 
        &= \sum_{{\boldsymbol{z}}'' } \sum_{j=1}^{J} a_j \left( \begin{bmatrix} \boldsymbol{z}' - \boldsymbol{\nu}_j' \\ {\boldsymbol{z}}'' - \boldsymbol{\nu}_j'' \end{bmatrix} \right) p \left( \begin{bmatrix} \boldsymbol{z}' - \boldsymbol{\nu}_j' \\ {\boldsymbol{z}}'' - \boldsymbol{\nu}_j'' \end{bmatrix}, t \right) \\
        &- \sum_{{\boldsymbol{z}}'' } \sum_{j=1}^{J} a_j \left( \begin{bmatrix} \boldsymbol{z}' \\ {\boldsymbol{z}}'' \end{bmatrix} \right) p \left( \begin{bmatrix} \boldsymbol{z}' \\ {\boldsymbol{z}}'' \end{bmatrix}, t \right).
    \end{align*}
    \end{small}
    Under the non-explosivity assumption \eqref{eq:non_expl_cond}, the equation can be rewritten as follows:
    \begin{small}
    \begin{equation}
    \label{eq:MP_proof_1} 
    \begin{aligned}
         \frac{\mathrm{d}}{\mathrm{d} t} \left( \sum_{{\boldsymbol{z}}'' } p \left( \begin{bmatrix} \boldsymbol{z}' \\ {\boldsymbol{z}}'' \end{bmatrix}, t \right) \right)
         &= \sum_{j=1}^{J} \sum_{{\boldsymbol{z}}'' } a_j \left( \begin{bmatrix} \boldsymbol{z}' - \boldsymbol{\nu}_j' \\ {\boldsymbol{z}}'' - \boldsymbol{\nu}_j'' \end{bmatrix} \right) p \left( \begin{bmatrix} \boldsymbol{z}' - \boldsymbol{\nu}_j' \\ {\boldsymbol{z}}'' - \boldsymbol{\nu}_j'' \end{bmatrix}, t \right) \\
         &- \sum_{j=1}^{J} \sum_{{\boldsymbol{z}}'' } a_j \left( \begin{bmatrix} \boldsymbol{z}' \\ {\boldsymbol{z}}'' \end{bmatrix} \right) p \left( \begin{bmatrix} \boldsymbol{z}' \\ {\boldsymbol{z}}'' \end{bmatrix}, t \right)
    \end{aligned}
    \end{equation}
    \end{small}
    The left-hand side already has the desired marginal distribution. Consider the first sum on the right-hand side of \eqref{eq:MP_proof_1}:
    \begin{small}
    \begin{align*}
        \sum_{{\boldsymbol{z}}'' } a_j \left( \begin{bmatrix} \boldsymbol{z}' - \boldsymbol{\nu}_j' \\ {\boldsymbol{z}}'' - \boldsymbol{\nu}_j''\end{bmatrix} \right) &p \left( \begin{bmatrix}\boldsymbol{z}' - \boldsymbol{\nu}_j' \\ {\boldsymbol{z}}'' - \boldsymbol{\nu}_j'' \end{bmatrix}, t \right)  
        = \sum_{{\boldsymbol{z}}'' } a_j \left( \begin{bmatrix} \boldsymbol{z}' - \boldsymbol{\nu}_j' \\ {\boldsymbol{z}}'' \end{bmatrix} \right) p \left( \begin{bmatrix} \boldsymbol{z}' - \boldsymbol{\nu}_j' \\ {\boldsymbol{z}}'' \end{bmatrix}, t \right) \\
        &= \sum\limits_{{\boldsymbol{z}}'' } a_j \left( \begin{bmatrix} \boldsymbol{z}' - \boldsymbol{\nu}_j' \\ {\boldsymbol{z}}'' \end{bmatrix} \right)  \frac{ p \left( \begin{bmatrix} \boldsymbol{z}' - \boldsymbol{\nu}_j' \\ {\boldsymbol{z}}'' \end{bmatrix}, t \right) }{ \sum\limits_{{\boldsymbol{z}}'' } p \left( \begin{bmatrix}\boldsymbol{z}' - \boldsymbol{\nu}_j' \\ {\boldsymbol{z}}'' \end{bmatrix}, t \right)} \cdot \left( \sum_{{\boldsymbol{z}}'' } p \left( \begin{bmatrix} \boldsymbol{z}' - \boldsymbol{\nu}_j' \\ {\boldsymbol{z}}'' \end{bmatrix}, t \right)  \right) \\
        &= \underbrace{ \Econdmu{a_j \left( \begin{bmatrix} \boldsymbol{Z}'(t) \\ \boldsymbol{Z}''(t) \end{bmatrix} \right)}{\boldsymbol{Z}'(t) = \boldsymbol{z}' - \boldsymbol{\nu}_j' } }_{\textstyle = \bar{a}_j (\boldsymbol{z}' - \boldsymbol{\nu}_j', t)} \cdot \left( \sum_{{\boldsymbol{z}}'' } p \left( \begin{bmatrix} \boldsymbol{z}' - \boldsymbol{\nu}_j' \\ {\boldsymbol{z}}'' \end{bmatrix}, t \right) \right).
    \end{align*}
    \end{small}
    The denominator is zero only if the whole expression is zero; in this case, $(\boldsymbol{z}'-\boldsymbol{\nu}_j', \cdot)$ can be excluded from the state space because it is unreachable.
    
    Similarly, the second sum on the right-hand side of \eqref{eq:MP_proof_1} is transformed:
    \begin{small}
    \begin{align*}
        \sum_{{\boldsymbol{z}}'' } a_j \left( \begin{bmatrix} \boldsymbol{z}' \\ {\boldsymbol{z}}'' \end{bmatrix} \right) p \left( \begin{bmatrix} \boldsymbol{z}' \\ {\boldsymbol{z}}'' \end{bmatrix} , t \right) 
        &=  \underbrace{ \Econdmu{a_j\left( \begin{bmatrix} \boldsymbol{z}' \\ {\boldsymbol{z}}'' \end{bmatrix} \right)}{\boldsymbol{Z}'(t) = \boldsymbol{z}' } }_{\textstyle = \bar{a}_j (\boldsymbol{z}', t)} \left( \sum_{{\boldsymbol{z}}'' } p\left( \begin{bmatrix} \boldsymbol{z}' \\ {\boldsymbol{z}}'' \end{bmatrix}, t \right) \right).
    \end{align*}
    \end{small}
    The denominator here is also not zero due to the same reason.
    
    The results reveal that \eqref{eq:MP_proof_1} can be written as follows:
    \begin{small}
    \begin{align*}
        \frac{\mathrm{d}}{\mathrm{d} t} \left( \sum_{{\boldsymbol{z}}'' } p\left( \begin{bmatrix} \boldsymbol{z}' \\ {\boldsymbol{z}}'' \end{bmatrix}, t \right) \right) 
        &= \sum_{j=1}^{J} \bar{a}_j (\boldsymbol{z}' - \boldsymbol{\nu}_j', t) \left( \sum_{{\boldsymbol{z}}'' } p\left( \begin{bmatrix} \boldsymbol{z}' - \boldsymbol{\nu}_j' \\ {\boldsymbol{z}}'' \end{bmatrix}, t \right) \right) \\
        &- \sum_{j=1}^{J} \bar{a}_j (\boldsymbol{z}', t) \left( \sum_{{\boldsymbol{z}}'' } p\left( \begin{bmatrix} \boldsymbol{z}' \\ {\boldsymbol{z}}'' \end{bmatrix}, t \right) \right) .
    \end{align*}
    \end{small}
    Thus, we obtained the \ac{ODE} for the probability function of the process $\boldsymbol{Z}'$, which is the same as the \ac{CME} for the process $\bar{\boldsymbol{Z}}'$. Furthermore, the initial conditions for these \acp{ODE} coincide because $\bar{\boldsymbol{Z}}'(0) \overset{d}{=} \boldsymbol{Z}'(0)$. Finally, the statement of the theorem follows from the uniqueness of the solution to the initial value problem.
\end{proof}

\section{Proof of Theorem~\ref{th:FMP}}
\label{sec:FMP_proof}

\begin{proof}  
    According to the filtering equation \eqref{eq:filtering_equation_rho}, the unnormalized conditional probability function $\rho_{\boldsymbol{y}}(\boldsymbol{x}, t)$ of $\boldsymbol{X}(t)$ for $t \in (t_k, t_{k+1})$ satisfies the following:
    \begin{small}
    \begin{align*}
        \frac{\mathrm{d}}{\mathrm{d} t} \rho_{\boldsymbol{y}}(\boldsymbol{x}, t) 
        &= \sum_{j \in \mathcal{U}} \rho_{\boldsymbol{y}}(\boldsymbol{x} - \boldsymbol{\nu}_{\boldsymbol{x}, j}, t) a_j \left( \boldsymbol{x}-\boldsymbol{\nu}_{\boldsymbol{x}, j}, \boldsymbol{y}(t_k) \right) - \sum_{j=1}^{J} \rho_{\boldsymbol{y}}(\boldsymbol{x}, t) a_j \left( \boldsymbol{x}, \boldsymbol{y}(t_k) \right).
    \end{align*}
    \end{small}
    Summing these equations for all possible states for $\boldsymbol{X}''(t) \in \mathbb{Z}^{d''}$ yields
    \begin{small}
    \begin{align*}
        \sum_{\boldsymbol{x}'' \in \mathbb{Z}^{d''}} \frac{\mathrm{d}}{\mathrm{d} t} \rho_{\boldsymbol{y}} \left( \begin{bmatrix} \boldsymbol{x}' \\ \boldsymbol{x}'' \end{bmatrix} , t \right) 
        &= \sum_{\boldsymbol{x}'' \in \mathbb{Z}^{d''}}\sum_{j \in \mathcal{U}} a_j \left( \begin{bmatrix} \boldsymbol{x}' - \boldsymbol{\nu}_{\boldsymbol{x}, j}' \\ \boldsymbol{x}'' - \boldsymbol{\nu}_{\boldsymbol{x}, j}'' \end{bmatrix}, \boldsymbol{y}(t_k) \right) \rho_{\boldsymbol{y}} \left( \begin{bmatrix} \boldsymbol{x}' - \boldsymbol{\nu}_{\boldsymbol{x}, j}' \\ \boldsymbol{x}'' - \boldsymbol{\nu}_{\boldsymbol{x}, j}''\end{bmatrix} , t \right)  \\
        &- \sum_{\boldsymbol{x}'' \in \mathbb{Z}^{d''}} \sum_{j=1}^{J} a_j \left( \begin{bmatrix} \boldsymbol{x}' \\ \boldsymbol{x}'' \end{bmatrix}, \boldsymbol{y}(t_k) \right) \rho_{\boldsymbol{y}} \left( \begin{bmatrix} \boldsymbol{x}' \\ \boldsymbol{x}'' \end{bmatrix}, t \right) .
    \end{align*}
    \end{small}
    Under the non-explosivity assumption \eqref{eq:non_expl_cond}, it can be rewritten as
    \begin{small}
    \begin{equation}
    \label{eq:FMP_proof_1} 
    \begin{aligned}
        \frac{\mathrm{d}}{\mathrm{d} t} \left( \sum_{\boldsymbol{x}'' \in \mathbb{Z}^{d''}} \rho_{\boldsymbol{y}} \left( \begin{bmatrix} \boldsymbol{x}' \\ \boldsymbol{x}'' \end{bmatrix}, t \right) \right) 
        &= \sum_{j \in \mathcal{U}} \sum_{\boldsymbol{x}'' \in \mathbb{Z}^{d''}} a_j \left( \begin{bmatrix} \boldsymbol{x}' - \boldsymbol{\nu}_{\boldsymbol{x}, j}' \\ \boldsymbol{x}'' - \boldsymbol{\nu}_{\boldsymbol{x}, j}'' \end{bmatrix}, \boldsymbol{y}(t_k) \right) \rho_{\boldsymbol{y}} \left( \begin{bmatrix} \boldsymbol{x}' - \boldsymbol{\nu}_{\boldsymbol{x}, j}' \\ \boldsymbol{x}'' - \boldsymbol{\nu}_{\boldsymbol{x}, j}''\end{bmatrix}, t \right)  \\
        &- \sum_{j=1}^{J} \sum_{\boldsymbol{x}'' \in \mathbb{Z}^{d''}} a_j \left( \begin{bmatrix} \boldsymbol{x}' \\ \boldsymbol{x}'' \end{bmatrix}, \boldsymbol{y}(t_k) \right) \rho_{\boldsymbol{y}} \left( \begin{bmatrix} \boldsymbol{x}' \\ \boldsymbol{x}'' \end{bmatrix}, t \right) .
    \end{aligned}
    \end{equation}
    \end{small}

    Consider the first sum in \eqref{eq:FMP_proof_1}:
    \begin{footnotesize}
    \begin{align*}
        &\sum_{\boldsymbol{x}'' \in \mathbb{Z}^{d''}} a_j \left( \begin{bmatrix} \boldsymbol{x}' - \boldsymbol{\nu}_{\boldsymbol{x}, j}' \\ \boldsymbol{x}'' - \boldsymbol{\nu}_{\boldsymbol{x}, j}'' \end{bmatrix}, \boldsymbol{y}(t_k) \right) \rho_{\boldsymbol{y}} \left( \begin{bmatrix} \boldsymbol{x}' - \boldsymbol{\nu}_{\boldsymbol{x}, j}' \\ \boldsymbol{x}'' - \boldsymbol{\nu}_{\boldsymbol{x}, j}'' \end{bmatrix}, t \right) \\
        &= \sum_{\boldsymbol{x}'' \in \mathbb{Z}^{d''}} a_j \left( \begin{bmatrix} \boldsymbol{x}' - \boldsymbol{\nu}_{\boldsymbol{x}, j}' \\ \boldsymbol{x}'' \end{bmatrix}, \boldsymbol{y}(t_k) \right) \rho_{\boldsymbol{y}} \left( \begin{bmatrix} \boldsymbol{x}' - \boldsymbol{\nu}_{\boldsymbol{x}, j}' \\ \boldsymbol{x}'' \end{bmatrix}, t \right) \\
        &= \sum_{\boldsymbol{x}'' \in \mathbb{Z}^{d''}} a_j \left( \begin{bmatrix} \boldsymbol{x}' - \boldsymbol{\nu}_{\boldsymbol{x}, j}' \\ \boldsymbol{x}'' \end{bmatrix}, \boldsymbol{y}(t_k) \right) \frac{ \rho_{\boldsymbol{y}} \left( \begin{bmatrix} \boldsymbol{x}' - \boldsymbol{\nu}_{\boldsymbol{x}, j}' \\ \boldsymbol{x}'' \end{bmatrix}, t \right) }{ \sum\limits_{\boldsymbol{x}'' \in \mathbb{Z}^{d''}} \rho_{\boldsymbol{y}}\left( \begin{bmatrix} \boldsymbol{x}' - \boldsymbol{\nu}_{\boldsymbol{x}, j}' \\ \boldsymbol{x}'' \end{bmatrix}, t \right) }  \left( \sum_{\boldsymbol{x}'' \in \mathbb{Z}^{d''}} \rho_{\boldsymbol{y}} \left( \begin{bmatrix} \boldsymbol{x}' - \boldsymbol{\nu}_{\boldsymbol{x}, j}' \\ \boldsymbol{x}'' \end{bmatrix}, t \right) \right) \\
        &= \sum_{\boldsymbol{x}'' \in \mathbb{Z}^{d''}} a_j \left( \begin{bmatrix} \boldsymbol{x}' - \boldsymbol{\nu}_{\boldsymbol{x}, j}' \\ \boldsymbol{x}'' \end{bmatrix}, \boldsymbol{y}(t_k) \right) \frac{ \pi_{\boldsymbol{y}} \left( \begin{bmatrix} \boldsymbol{x}' - \boldsymbol{\nu}_{\boldsymbol{x}, j}' \\ \boldsymbol{x}'' \end{bmatrix}, t \right) }{ \sum\limits_{\boldsymbol{x}'' \in \mathbb{Z}^{d''}} \pi_{\boldsymbol{y}} \left( \begin{bmatrix} \boldsymbol{x}' - \boldsymbol{\nu}_{\boldsymbol{x}, j}' \\ \boldsymbol{x}'' \end{bmatrix}, t \right) }  \left( \sum_{\boldsymbol{x}'' \in \mathbb{Z}^{d''}} \rho_{\boldsymbol{y}} \left( \begin{bmatrix} \boldsymbol{x}' - \boldsymbol{\nu}_{\boldsymbol{x}, j}' \\ \boldsymbol{x}'' \end{bmatrix}, t \right) \right) .
    \end{align*}
    \end{footnotesize}
    The last transition follows from dividing the numerator and denominator by the normalization factor $\sum\limits_{\boldsymbol{x}' \in \mathbb{Z}^{d'}} \sum\limits_{\boldsymbol{x}'' \in \mathbb{Z}^{d''}} \rho_{\boldsymbol{y}} \left( \begin{bmatrix} \boldsymbol{x}' \\ \boldsymbol{x}'' \end{bmatrix}, t \right)$. The denominator is zero only if all hidden states $\begin{bmatrix} \boldsymbol{x}' \\ \cdot \end{bmatrix}$ are unreachable and can be excluded.
    Therefore, the first term on the right-hand side of \eqref{eq:FMP_proof_1} simplifies to 
    \begin{small}
    \begin{align*}
        \sum_{j \in \mathcal{U}} \underbrace{\Econdmu{a_j (\boldsymbol{Z}(t))}{ \boldsymbol{X}'(t) = \boldsymbol{x}'-\boldsymbol{\nu}_{\boldsymbol{x}, j}' , \boldsymbol{Y}(s) = \boldsymbol{y}(s), s \leq t}}_{\textstyle = \Tilde{a}_j (\boldsymbol{x}'-\boldsymbol{\nu}_{\boldsymbol{x}, j}', \boldsymbol{y}(t_k), t) } \left( \sum_{\boldsymbol{x}'' \in \mathbb{Z}^{d''}} \rho_{\boldsymbol{y}} \left( \begin{bmatrix} \boldsymbol{x}' \\ \boldsymbol{x}'' \end{bmatrix}, t \right) \right).
    \end{align*}
    \end{small}
    
    The same procedure reveals that the second term on the right-hand side of \eqref{eq:FMP_proof_1} is equal to
    \begin{small}
    \begin{align*}
        \sum_{j = 1}^{J} \underbrace{\Econdmu{a_j (\boldsymbol{Z}(t))}{ \boldsymbol{X}'(t) = \boldsymbol{x}', \boldsymbol{Y}(s) = \boldsymbol{y}(s), s \leq t}}_{\textstyle = \Tilde{a}_j (\boldsymbol{x}', \boldsymbol{y}(t_k), t) } \cdot \left( \sum_{\boldsymbol{x}'' \in \mathbb{Z}^{d''}} \rho_{\boldsymbol{y}} \left( \begin{bmatrix} \boldsymbol{x}' \\ \boldsymbol{x}'' \end{bmatrix}, t \right) \right).
    \end{align*}
    \end{small}
    Gathering all the results into \eqref{eq:FMP_proof_1} yields the following:
    \begin{small}
    \begin{align*}
        \frac{\mathrm{d}}{\mathrm{d} t} \Tilde{\rho}(\boldsymbol{x}', t)
        &= \sum_{j \in \mathcal{U}} \Tilde{a}_j (\boldsymbol{x}' - \boldsymbol{\nu}_{\boldsymbol{x}, j}', \boldsymbol{y}(t_k), t) \Tilde{\rho}(\boldsymbol{x}' - \boldsymbol{\nu}_{\boldsymbol{x}, j}', t) - \sum_{j=1}^{J} \Tilde{a}_j (\boldsymbol{x}',  \boldsymbol{y}(t_k)) \Tilde{\rho}(\boldsymbol{x}', t) ,
    \end{align*}
    \end{small}
    where  $\Tilde{\rho}(\boldsymbol{x}', t) = \sum\limits_{\boldsymbol{x}'' \in \mathbb{Z}^{d''}} \rho_{\boldsymbol{y}} \left( \begin{bmatrix} \boldsymbol{x}' \\ \boldsymbol{x}'' \end{bmatrix}, t \right) $.
    This is exactly the filtering equation for $\Tilde{\boldsymbol{Z}}'$ between jumps. 
    
    Using the same summation procedure, also proves that the updating according to \eqref{eq:filtering_equation_pi_jump} at the jump times for the marginal distribution of $\boldsymbol{X}'$ is the same as that for $\Tilde{\boldsymbol{X}}'$. Finally, $\Tilde{\boldsymbol{Z}}'(0) \overset{d}{=} \boldsymbol{Z}'(0)$ ensures the same initial conditions for the filtering equation; hence, the solutions coincide.
\end{proof}

\section{Proof of Theorem~\ref{th:sensitivity}}
\label{sec:FMP_error_proof}
\begin{proof}

Consider $\Tilde{\pi}(\boldsymbol{x}',t)$ and $\Tilde{\pi}^{\varepsilon} (\boldsymbol{x}', t)$ as solutions to the corresponding filtering equations. Let $\mathsf{X}' = \{ \boldsymbol{x}'_1, \boldsymbol{x}'_2, \dots \}$ be the infinite state space and $\Tilde{\pi}(t) = (\Tilde{\pi}(\boldsymbol{x}'_1, t), \Tilde{\pi}(\boldsymbol{x}'_2, t), \dots )^\top$  be the infinite-dimensional vector, then the corresponding filtering equation \eqref{eq:filtering_equation_pi} can be written as follows:
\begin{small}
\begin{equation}
    \label{eq:filtering_pi_matrix}
    \frac{\mathrm{d}}{\mathrm{d} t} \Tilde{\pi}(t) = \mathcal{A}(t) \Tilde{\pi}(t) + \langle \alpha(t), \Tilde{\pi}(t) \rangle \cdot \Tilde{\pi}(t), \qquad t\in(t_k, t_{k+1}),
\end{equation}
\end{small}
where $\mathcal{A}(t)$ is a linear operator given by the infinite-dimensional matrix with the entries
\begin{small}
\begin{align*}
    \mathcal{A}_{n m}(t) := 
    \begin{cases}
        - \sum\limits_{j = 1}^{J'} \Tilde{a}_j(\boldsymbol{x}_{n}, \boldsymbol{y}(t), t) & \text{if } \boldsymbol{x}_{n} = \boldsymbol{x}_{m} \\
        \Tilde{a}_j(\boldsymbol{x}_{m}, \boldsymbol{y}(t), t) & \text{if } \exists j \in \mathcal{U}': \boldsymbol{x}_{n}' = \boldsymbol{x}_{m}' + \boldsymbol{\nu}_{\boldsymbol{x}, j}' \\
        0 & \text{otherwise}
    \end{cases}
\end{align*}
\end{small}
and $\langle \alpha(t), \cdot \rangle$ is a linear functional given by a scalar product with the infinite-dimensional vector $\alpha(t)$ with entries
$
    \alpha_n(t) := \sum_{j \in \mathcal{O}} \Tilde{a}_j (\boldsymbol{x}_n', \boldsymbol{y}(t), t).
$
Equation \eqref{eq:filtering_equation_pi_jump} can be written as follows:
\begin{small}
\begin{equation}
    \label{eq:filtering_pi_jump_matrix}
    \Tilde{\pi}(t_k) = \frac{\mathcal{B}(t_k^-) \Tilde{\pi}(t_k^-)}{\norm{\mathcal{B}(t_k^-) \Tilde{\pi}(t_k^-)}_1},
\end{equation}
\end{small}
where $\mathcal{B}(t_k^-)$ is a linear operator given by the infinite-dimensional matrix with entries
\begin{small}
\begin{align*}
    \mathcal{B}_{n m}(t_k^-) := 
    \begin{cases}
        \frac{1}{\abs{\mathcal{O}_k}} \Tilde{a}_j(\boldsymbol{x}_{m}', \boldsymbol{y}(t_k^-), t_k^-) & \text{if } \exists j \in \mathcal{O}_k: \boldsymbol{x}_{n}' = \boldsymbol{x}_{m}' + \boldsymbol{\nu}_{\boldsymbol{x}, j}' \\
        0 & \text{otherwise}.
    \end{cases}
\end{align*}
\end{small}
We denote by $\mathcal{A}^{\varepsilon}$, $\alpha^{\varepsilon}$, and $\mathcal{B}^{\varepsilon}$ the corresponding operators obtained by replacing all propensities $\Tilde{a}$ with their its estimates $\Tilde{a}^{\varepsilon}$. Therefore, $\Tilde{\pi}^{\varepsilon}$ obeys
\begin{small}
\begin{equation}
    \label{eq:filtering_piM_matrix}
    \frac{\mathrm{d}}{\mathrm{d} t} \Tilde{\pi}^{\varepsilon}(t) = \mathcal{A}^{\varepsilon}(t) \Tilde{\pi}(t) + \langle \alpha^{\varepsilon}(t), \Tilde{\pi}^{\varepsilon}(t) \rangle \cdot \Tilde{\pi}^{\varepsilon}(t), \qquad t \in (t_k, t_{k+1}),
\end{equation} 
\end{small}
\vspace{-0.5cm}
\begin{small}
\begin{equation}
    \label{eq:filtering_piM_jump_matrix}
    \Tilde{\pi}^{\varepsilon}(t_k) = \frac{\mathcal{B}^{\varepsilon}(t_k^-) \Tilde{\pi}^{\varepsilon}(t_k^-)}{\norm{\mathcal{B}^{\varepsilon}(t_k^-) \Tilde{\pi}^{\varepsilon}(t_k^-)}_1}.
\end{equation}
\end{small}

We show the statement of Theorem~\ref{th:sensitivity} by induction on jump times $t_0, t_1, \dots, t_N$. For $t_0 = 0$, the error is zero due to the equality of the initial distributions. Assume that $\Emu{\norm{\Tilde{\pi}(t) - \Tilde{\pi}^{\varepsilon}(t)}_1} = O(\varepsilon)$ for all $t \in [0, t_k]$ with arbitrary fixed $k$. The goal is to show $\Emu{\norm{\Tilde{\pi}(t) - \Tilde{\pi}^{\varepsilon}(t)}_1} = O(\varepsilon)$ for all $t \in [0, t_{k+1}]$. For $t \in (t_k, t_{k+1})$ from \eqref{eq:filtering_pi_matrix}, \eqref{eq:filtering_piM_matrix}, and the triangle inequality, we obtain
\begin{small}
\begin{equation}
\label{eq:FMP_error_decomp}
\begin{aligned}
    \Emu{\norm{\Tilde{\pi}(t) - \Tilde{\pi}^{\varepsilon}(t)}_1} &\leq \Emu{ \norm{ \Tilde{\pi}(t_k) - \Tilde{\pi}^{\varepsilon}(t_k) }_1 } \\
    &\phantom{\leq} + \Emu{ \int_{t_k}^{t} \norm{\mathcal{A}(s) \Tilde{\pi}(s) - \mathcal{A}^{\varepsilon}(s) \Tilde{\pi}^{\varepsilon}(s) }_1 \diff s } \\ 
    &\phantom{\leq} + \Emu{ \int_{t_k}^{t} \norm{\langle \alpha(s), \Tilde{\pi}(s) \rangle \cdot \Tilde{\pi}(s) - \langle \alpha^{\varepsilon}(s), \Tilde{\pi}^{\varepsilon}(s) \rangle \cdot \Tilde{\pi}^{\varepsilon}(s)}_1 \diff s }.
\end{aligned}
\end{equation}
\end{small}

Next, we apply the triangle inequality to the second term:
\begin{small}
\begin{align*}
    &\Emu{ \int_{t_k}^{t} \norm{\mathcal{A}(s) \Tilde{\pi}(s) - \mathcal{A}^{\varepsilon}(s) \Tilde{\pi}^{\varepsilon}(s) }_1 \diff s } \\
    &\leq \Emu{ \int_{t_k}^{t} \norm{\mathcal{A}(s) \Tilde{\pi}(s) - \mathcal{A}(s) \Tilde{\pi}^{\varepsilon}(s) }_1 \diff s } + \Emu{ \int_{t_k}^{t} \norm{\mathcal{A}(s) \Tilde{\pi}^{\varepsilon}(s) - \mathcal{A}^{\varepsilon}(s) \Tilde{\pi}^{\varepsilon}(s) }_1 \diff s } \\
    &\leq \Emu{ \int_{t_k}^{t} \norm{ \mathcal{A}(s) }_1 \norm{ \Tilde{\pi}(s) -  \Tilde{\pi}^{\varepsilon}(s) }_1 \diff s } + \Emu{ \int_{t_k}^{t} \norm{ \mathcal{A}(s) - \mathcal{A}^{\varepsilon}(s) }_1 \norm{ \Tilde{\pi}^{\varepsilon}(s) }_1 \diff s }.
\end{align*}
\end{small}
The assumption \eqref{eq:assumption_bound_propens_FMP} implies $\norm{\mathcal{A}(s)}_1 \leq 2 C_2(\boldsymbol{y}(t_k))$. In addition, $\norm{\Tilde{\pi}^{\varepsilon}(s)}_1 = 1$ and, according to \eqref{eq:a_tilde_error_eps}, $\Emu{\norm{\mathcal{A}(s) - \mathcal{A}^{\varepsilon}(s)}_1} = O(\varepsilon)$ for all $s \in (t_k, t)$. Therefore, the second term in $\eqref{eq:FMP_error_decomp}$ is bounded by the following:
\begin{small}
\begin{align*}
    2 C_2(\boldsymbol{y}(t_k)) \cdot \Emu{ \int_{t_k}^{t} \norm{ \Tilde{\pi}(s) -  \Tilde{\pi}^{\varepsilon}(s) }_1 \diff s } + C \varepsilon \cdot (t-t_k).
\end{align*}
\end{small}

We also use the triangle inequality to bound the third term in \eqref{eq:FMP_error_decomp}:
\begin{small}
\begin{align*}
    &\Emu{ \int_{t_k}^{t} \norm{\langle \alpha(s), \Tilde{\pi}(s) \rangle \cdot \Tilde{\pi}(s) - \langle \alpha^{\varepsilon}(s), \Tilde{\pi}^{\varepsilon}(s) \rangle \cdot \Tilde{\pi}^{\varepsilon}(s)}_1 \diff s } \\
    &\leq \Emu{ \int_{t_k}^{t} \abs{\langle \alpha(s), \Tilde{\pi}(s) \rangle}  \norm{\Tilde{\pi}(s) - \Tilde{\pi}^{\varepsilon}(s)}_1 \diff s } \\
    &\phantom{\leq} + \Emu{ \int_{t_k}^{t} \norm{\Tilde{\pi}^{\varepsilon}(s)}_1 \abs{\langle \alpha(s), \Tilde{\pi}(s) \rangle - \langle \alpha^{\varepsilon}(s), \Tilde{\pi}^{\varepsilon}(s) \rangle } \diff s } \\
    &\leq \Emu{ \int_{t_k}^{t} \norm{\alpha(s)}_{\infty} \norm{\Tilde{\pi}(s) - \Tilde{\pi}^{\varepsilon}(s)}_1 \diff s } + \Emu{ \int_{t_k}^{t} \abs{\langle \alpha(s), \Tilde{\pi}(s) \rangle - \langle \alpha^{\varepsilon}(s), \Tilde{\pi}^{\varepsilon}(s) \rangle } \diff s }.
\end{align*}
\end{small}
The last inequality follows from H\"{o}lder's inequality, and $\norm{\Tilde{\pi}(s)} = \norm{\Tilde{\pi}^{\varepsilon}(s)} = 1$. To further bound this expression, we use \eqref{eq:assumption_bound_propens_FMP} to obtain $\norm{\alpha(s)}_{\infty} = C_2(\boldsymbol{y}(t_k))$ and apply the triangle inequality again:
\begin{small}
\begin{align*}
    &C_2(\boldsymbol{y}(t_k)) \cdot \Emu{ \int_{t_k}^{t} \norm{\Tilde{\pi}(s) - \Tilde{\pi}^{\varepsilon}(s)}_1 \diff s } \\
    &\phantom{\leq} + \Emu{ \int_{t_k}^{t} \abs{\langle \alpha(s), \Tilde{\pi}(s) - \Tilde{\pi}^{\varepsilon}(s) \rangle } \diff s } 
    + \Emu{ \int_{t_k}^{t} \abs{\langle \alpha(s) - \alpha^{\varepsilon}(s), \Tilde{\pi}^{\varepsilon} (s) \rangle } \diff s } \\
    &\leq C_2(\boldsymbol{y}(t_k)) \cdot \Emu{ \int_{t_k}^{t} \norm{\Tilde{\pi}(s) - \Tilde{\pi}^{\varepsilon}(s)}_1 \diff s } \\
    &\phantom{\leq} + \Emu{ \int_{t_k}^{t}  \norm{\alpha(s)}_{\infty} \cdot \norm{ \Tilde{\pi}(s) - \Tilde{\pi}^{\varepsilon}(s) }_1 \diff s } 
    + \Emu{ \int_{t_k}^{t} \norm{ \alpha(s) - \alpha^{\varepsilon}(s) }_{\infty} \cdot \norm{ \Tilde{\pi}^{\varepsilon} (s) }_1 \diff s } \\
    &\leq 2 C_2(\boldsymbol{y}(t_k)) \cdot \Emu{ \int_{t_k}^{t} \norm{\Tilde{\pi}(s) - \Tilde{\pi}^{\varepsilon}(s)}_1 \diff s } + C \varepsilon \cdot (t - t_k).
\end{align*}
\end{small}
The last line follows from \eqref{eq:assumption_bound_propens_FMP} and \eqref{eq:a_tilde_error_eps}.

Gathering all bounds back into \eqref{eq:FMP_error_decomp} yields
\begin{small}
\begin{align*}
    \Emu{\norm{\Tilde{\pi}(t) - \Tilde{\pi}^{\varepsilon}(t)}_1} &\leq \Emu{ \norm{ \Tilde{\pi}(t_k) - \Tilde{\pi}^{\varepsilon}(t_k) }_1 } \\
    &\phantom{\leq} +  4 C_2(\boldsymbol{y}(t_k)) \cdot \Emu{ \int_{t_k}^{t} \norm{\Tilde{\pi}(s) - \Tilde{\pi}^{\varepsilon}(s)}_1 \diff s }
    + 2C \varepsilon \cdot (t - t_k).
\end{align*}
\end{small}
Using Gr\"{o}nwall's inequality for $t \mapsto \Emu{\norm{ \Tilde{\pi}(t) - \Tilde{\pi}^{\varepsilon} (t) }_1}$ results in
\begin{small}
\begin{align*}
    \Emu{\norm{\Tilde{\pi}(t) - \Tilde{\pi}^{\varepsilon}(t)}_1} &\leq \left[ \Emu{ \norm{ \Tilde{\pi}(t_k) - \Tilde{\pi}^{\varepsilon}(t_k) }_1 } + 2C \varepsilon \cdot (t - t_k) \right] \\
    &\phantom{\leq} \times \exp \big( 4 C_2(\boldsymbol{y}(t_k)) \cdot (t-t_k) \big),
\end{align*}
\end{small}
that is, 
\begin{small}
\begin{align*}
    \Emu{\norm{\Tilde{\pi}(t) - \Tilde{\pi}^{\varepsilon}(t)}_1} = O(\varepsilon) \quad \text{for all} \quad t \in (t_k, t_{k+1}). 
\end{align*}
\end{small}

To show the result for $t = t_{k+1}$, \eqref{eq:filtering_pi_jump_matrix} and \eqref{eq:filtering_piM_jump_matrix} are used as follows:
\begin{small}
\begin{equation}
\label{eq:FMP_error_decomp_jump}
\begin{aligned}
    \Emu{\norm{\Tilde{\pi}(t_{k+1}) - \Tilde{\pi}^{\varepsilon}(t_{k+1})}_1} &= \Emu{\norm{ \frac{\mathcal{B}(t_{k+1}^-) \Tilde{\pi}(t_{k+1}^-)}{\norm{\mathcal{B}(t_{k+1}^-) \Tilde{\pi}(t_{k+1}^-)}_1} - \frac{\mathcal{B}^{\varepsilon}(t_{k+1}^-) \Tilde{\pi}^{\varepsilon}(t_{k+1}^-)}{\norm{\mathcal{B}^{\varepsilon}(t_{k+1}^-) \Tilde{\pi}^{\varepsilon}(t_{k+1}^-)}_1} }_1} \\
    &\leq \Emu{\norm{ \frac{\mathcal{B}(t_{k+1}^-) \Tilde{\pi}(t_{k+1}^-)}{\norm{\mathcal{B}(t_{k+1}^-) \Tilde{\pi}(t_{k+1}^-)}_1} - \frac{\mathcal{B}^{\varepsilon}(t_{k+1}^-) \Tilde{\pi}^{\varepsilon}(t_{k+1}^-)}{\norm{\mathcal{B}(t_{k+1}^-) \Tilde{\pi}(t_{k+1}^-)}_1} }_1} \\
    &\phantom{\leq} + \Emu{\norm{ \frac{\mathcal{B}^{\varepsilon}(t_{k+1}^-) \Tilde{\pi}^{\varepsilon}(t_{k+1}^-)}{\norm{\mathcal{B}(t_{k+1}^-) \Tilde{\pi}(t_{k+1}^-)}_1} - \frac{\mathcal{B}^{\varepsilon}(t_{k+1}^-) \Tilde{\pi}^{\varepsilon}(t_{k+1}^-)}{\norm{\mathcal{B}^{\varepsilon}(t_{k+1}^-) \Tilde{\pi}^{\varepsilon}(t_{k+1}^-)}_1} }_1}, \\
\end{aligned}
\end{equation}
\end{small}
where the first term can be bounded by
\begin{small}
\begin{align*}
    \dots 
    &\leq \frac{1}{\norm{\mathcal{B}(t_{k+1}^-) \Tilde{\pi}(t_{k+1}^-)}_1} \Emu{\norm{\mathcal{B}(t_{k+1}^-)}_1 \cdot \norm{ \Tilde{\pi}(t_{k+1}^-) - \Tilde{\pi}^{\varepsilon}(t_{k+1}^-) }_1 } \\
    &\phantom{\leq} + \frac{1}{\norm{\mathcal{B}(t_{k+1}^-) \Tilde{\pi}(t_{k+1}^-)}_1} \Emu{\norm{ \mathcal{B}(t_{k+1}^-) - \mathcal{B}^{\varepsilon}(t_{k+1}^-)}_1 \cdot \norm{\Tilde{\pi}^{\varepsilon}(t_{k+1}^-) }_1 } \\
    &\leq \frac{\norm{\mathcal{B}(t_{k+1}^-)}_1}{\norm{\mathcal{B}(t_{k+1}^-) \Tilde{\pi}(t_{k+1}^-)}_1} \Emu{ \norm{ \Tilde{\pi}(t_{k+1}^-) - \Tilde{\pi}^{\varepsilon}(t_{k+1}^-) }_1 } + \frac{1}{\norm{\mathcal{B}(t_{k+1}^-) \Tilde{\pi}(t_{k+1}^-)}_1} \cdot C \varepsilon,
\end{align*}
\end{small}
and the second term can be bounded by
\begin{small}
\begin{align*}
    \dots &= \Emu{ \norm{ \mathcal{B}^{\varepsilon}(t_{k+1}^-) \Tilde{\pi}^{\varepsilon}(t_{k+1}^-) }_1 \cdot \abs{ \frac{1}{\norm{\mathcal{B}(t_{k+1}^-) \Tilde{\pi}(t_{k+1}^-)}_1} - \frac{1}{\norm{\mathcal{B}^{\varepsilon}(t_{k+1}^-) \Tilde{\pi}^{\varepsilon}(t_{k+1}^-)}_1} } } \\
    &= \Emu{ \norm{ \mathcal{B}^{\varepsilon}(t_{k+1}^-) \Tilde{\pi}^{\varepsilon}(t_{k+1}^-) }_1 \cdot \abs{ \frac{ \norm{\mathcal{B}^{\varepsilon}(t_{k+1}^-) \Tilde{\pi}^{\varepsilon}(t_{k+1}^-)}_1 - \norm{\mathcal{B}(t_{k+1}^-) \Tilde{\pi}(t_{k+1}^-)}_1 }{\norm{\mathcal{B}(t_{k+1}^-) \Tilde{\pi}(t_{k+1}^-)}_1 \cdot \norm{\mathcal{B}^{\varepsilon}(t_{k+1}^-) \Tilde{\pi}^{\varepsilon}(t_{k+1}^-)}_1} } } \\
    &= \frac{1}{\norm{\mathcal{B}(t_{k+1}^-) \Tilde{\pi}(t_{k+1}^-)}_1} \Emu{ \abs{ \norm{\mathcal{B}^{\varepsilon}(t_{k+1}^-) \Tilde{\pi}^{\varepsilon}(t_{k+1}^-)}_1 - \norm{\mathcal{B}(t_{k+1}^-) \Tilde{\pi}(t_{k+1}^-)}_1 } } .
\end{align*}
\end{small}
Both $\mathcal{B}(t_{k+1}^-) \Tilde{\pi}(t_{k+1}^-)$ and $\mathcal{B}^{\varepsilon}(t_{k+1}^-) \Tilde{\pi}^{\varepsilon}(t_{k+1}^-)$ are elementwise nonnegative; thus, the second term is bounded by
\begin{small}
\begin{align*}
    \dots &\leq \frac{1}{\norm{\mathcal{B}(t_{k+1}^-) \Tilde{\pi}(t_{k+1}^-)}_1} \Emu{ \norm{\mathcal{B}^{\varepsilon}(t_{k+1}^-) \Tilde{\pi}^{\varepsilon}(t_{k+1}^-) - \mathcal{B}(t_{k+1}^-) \Tilde{\pi}(t_{k+1}^-)}_1 }.
\end{align*}
\end{small}
This expression is exactly the first term; therefore, it has the same upper bound.

Inserting all bounds back into \eqref{eq:FMP_error_decomp_jump} yields
\begin{small}
\begin{align*}
    \Emu{\norm{\Tilde{\pi}(t_{k+1}) - \Tilde{\pi}^{\varepsilon}(t_{k+1})}_1} 
    &\leq \frac{2 \norm{\mathcal{B}(t_{k+1}^-)}_1}{\norm{\mathcal{B}(t_{k+1}^-) \Tilde{\pi}(t_{k+1}^-)}_1} \Emu{ \norm{ \Tilde{\pi}(t_{k+1}^-) - \Tilde{\pi}^{\varepsilon}(t_{k+1}^-) }_1 } \\
    &\phantom{\leq} + \frac{2}{\norm{\mathcal{B}(t_{k+1}^-) \Tilde{\pi}(t_{k+1}^-)}_1} \cdot C \varepsilon.
\end{align*}
\end{small}
Previously, we have shown that $\Emu{ \norm{ \Tilde{\pi}(t_{k+1}^-) - \Tilde{\pi}^{\varepsilon}(t_{k+1}^-) }_1 } = O(\varepsilon)$; therefore, $$\Emu{ \norm{ \Tilde{\pi}(t_{k+1}) - \Tilde{\pi}^{\varepsilon}(t_{k+1}) }_1 } = O(\varepsilon).$$ 

We have shown that, under the induction assumption, $\Emu{ \norm{ \Tilde{\pi}(t) - \Tilde{\pi}^{\varepsilon}(t) }_1 } = O(\varepsilon)$ for all $t \in [0, t_{k+1}]$. We conclude that the statement of Theorem~\ref{th:sensitivity} is true for all $t \in [0, t_N] = [0, T]$.

\end{proof}
    
    \bibliographystyle{plain}
    \bibliography{main} 

@article{Fang2022PF,
author = {Zhou Fang and Ankit Gupta and Mustafa Khammash},
title = {Stochastic filtering for multiscale stochastic reaction networks based on hybrid approximations},
journal = {Journal of Computational Physics},
volume = {467},
pages = {111441},
year = {2022}
}

@article{Fang2023RPF,
author = {Fang, Zhou and Gupta, Ankit and Khammash, Mustafa},
title = {Convergence of Regularized Particle Filters for Stochastic Reaction Networks},
journal = {SIAM Journal on Numerical Analysis},
volume = {61},
number = {2},
pages = {399-430},
year = {2023}
}

@article{rathinam2024stochastic,
  title={Stochastic filtering of reaction networks partially observed in time snapshots},
  author={Rathinam, Muruhan and Yu, Mingkai},
  journal={Journal of Computational Physics},
  volume={515},
  pages={113265},
  year={2024},
  publisher={Elsevier}
}

@article{Rathinam2021PFwithExactState,
    author = {Rathinam, Muruhan and Yu, Mingkai},
    title = {State and parameter estimation from exact partial state observation in stochastic reaction networks},
    journal = {The Journal of Chemical Physics},
    volume = {154},
    number = {3},
    pages = {034103},
    year = {2021},
}

@article{Golightly2011particleMCMC,
    author = {Golightly Andrew and Darren J. Wilkinson},
    title = {{B}ayesian parameter inference for stochastic biochemical network models using particle {M}arkov chain {M}onte {C}arlo},
    journal = {Interface Focus },
    volume = {1},
    number = {6},
    pages = {807-20},
    year = {2011},
}

@misc{Koyama2016ProjectionBased,
      title={Projection-based filtering for stochastic reaction networks}, 
      author={Shinsuke Koyama},
      year={2016},
      eprint={1607.08685},
      archivePrefix={arXiv},
      primaryClass={stat.ME}
}

@article{Brigo1999ProjectionOnExp,
 author = {Damiano Brigo and Bernard Hanzon and François Le Gland},
 journal = {Bernoulli},
 number = {3},
 pages = {495--534},
 publisher = {International Statistical Institute (ISI) and Bernoulli Society for Mathematical Statistics and Probability},
 title = {Approximate Nonlinear Filtering by Projection on Exponential Manifolds of Densities},
 volume = {5},
 year = {1999}
}

@article{Folia2018LNAwithKF,
      title={Trajectory inference and parameter estimation in stochastic models with temporally aggregated data}, 
      author={Maria Myrto Folia, Magnus Rattray},
      year={2018},
      journal={Statistics and Computing},
      volume={28},
      pages={1053–1072}
}

@article {DAmbrosio2022FFSP,
	author = {D’Ambrosio, Elena and Fang, Zhou and Gupta, Ankit and Kumar, Sant and Khammash, Mustafa},
	title = {Filtered finite state projection method for the analysis and estimation of stochastic biochemical reaction networks},
	elocation-id = {2022.10.18.512737},
	year = {2024},
	doi = {10.1101/2022.10.18.512737},
	publisher = {Cold Spring Harbor Laboratory},
	journal = {bioRxiv}
}

@InProceedings{Wiederanders2022AutomatedGeneration,
  title={Automated Generation of Conditional Moment Equations for Stochastic Reaction Networks},
  author={Wiederanders, Hanna Josephine and Moor, Anne-Lena and Zechner, Christoph},
  booktitle={International Conference on Computational Methods in Systems Biology},
  pages={286--293},
  year={2022},
  organization={Springer}
}

@book{Bain2009Fundamentals,
  title={Fundamentals of stochastic filtering},
  author={Bain, Alan and Crisan, Dan},
  volume={3},
  year={2009},
  publisher={Springer}
}

@article{Gordon1993PF,
   author = {N.J. Gordon, D.J. Salmond, A.F.M. Smith},
   title = {Novel approach to nonlinear/non-Gaussian {B}ayesian state estimation},
   journal = {IEE Proceedings F (Radar and Signal Processing)},
   issue = {2},   
   volume = {140},
   year = {1993},
   pages = {107-113(6)}
}

@article{Hammouda2023MP,
  title={Automated importance sampling via optimal control for stochastic reaction networks: A {M}arkovian projection--based approach},
  author={Ben Hammouda, Chiheb and Ben Rached, Nadhir and Tempone, Ra{\'u}l and Wiechert, Sophia},
  journal={Journal of Computational and Applied Mathematics},
  volume={446},
  pages={115853},
  year={2024},
  publisher={Elsevier}
}

@article{Gupta2021DeepCME,
    doi = {10.1371/journal.pcbi.1009623},
    author = {Gupta, Ankit AND Schwab, Christoph AND Khammash, Mustafa},
    journal = {PLOS Computational Biology},
    publisher = {Public Library of Science},
    title = {DeepCME: A deep learning framework for computing solution statistics of the chemical master equation},
    year = {2021},
    month = {12},
    volume = {17},
    pages = {1-23},
    number = {12},
}

@article{Anderson2007ModifiedNextReaction,
    author = {Anderson, David F.},
    title = {A modified next reaction method for simulating chemical systems with time dependent propensities and delays},
    journal = {The Journal of Chemical Physics},
    volume = {127},
    number = {21},
    pages = {214107},
    year = {2007},
    month = {12},
    issn = {0021-9606},
    doi = {10.1063/1.2799998},
    url = {https://doi.org/10.1063/1.2799998},
}

@article{Munsky2006FSP,
  title={The finite state projection algorithm for the solution of the chemical master equation},
  author={Munsky, Brian and Khammash, Mustafa},
  journal={The Journal of chemical physics},
  volume={124},
  number={4},
  year={2006},
  publisher={AIP Publishing}
}

@book{Ethier1986MarkovProcess,
    author = {Stewart N Ethier and Thomas G Kurtz},
    title = {{M}arkov processes: characterization and convergence},
    publisher = {John Wiley \& Sons},
    year = {1986} 
}

@article{Gillespie1992DerivationCME,
title = {A rigorous derivation of the chemical master equation},
journal = {Physica A: Statistical Mechanics and its Applications},
volume = {188},
number = {1},
pages = {404-425},
year = {1992},
issn = {0378-4371},
doi = {https://doi.org/10.1016/0378-4371(92)90283-V},
author = {Daniel T. Gillespie},
}

@article{Gillespie1977SSA,
title = {Exact Stochastic Simulation of Coupled Chemical Reactions},
journal = {The Journal of Physical Chemistry},
volume = {81},
number = {25},
pages = {2340-2361},
year = {1977},
author = {Daniel T. Gillespie},
}

@article{Gillespie1976,
title = {A general method for numerically simulating the stochastic time evolution of coupled chemical reactions},
journal = {Journal of Computational Physics},
volume = {22},
number = {4},
pages = {403-434},
year = {1976},
issn = {0021-9991},
doi = {https://doi.org/10.1016/0021-9991(76)90041-3},
author = {Daniel T Gillespie},
}

@article{Gyongy1986MP,
  title={Mimicking the one-dimensional marginal distributions of processes having an {I}t{\^o} differential},
  author={Gy{\o}ngy, Istv{\'a}n},
  journal={Probability theory and related fields},
  volume={71},
  number={4},
  pages={501--516},
  year={1986},
  publisher={Springer}
}

@article{Duso2018SelectedNodeSSA,
  title={Selected-node stochastic simulation algorithm},
  author={Duso, Lorenzo and Zechner, Christoph},
  journal={The Journal of chemical physics},
  volume={148},
  number={16},
  year={2018},
  publisher={AIP Publishing}
}

@article {Fang2022CMEmodularization,
	author = {Fang, Zhou and Gupta, Ankit and Kumar, Sant and Khammash, Mustafa},
	title = {A divide-and-conquer method for analyzing high-dimensional noisy gene expression networks},
	year = {2023},
	doi = {10.1101/2022.10.28.514278},
	publisher = {Cold Spring Harbor Laboratory},
	journal = {bioRxiv}
}

@article{Snyder2008HighDimPF,
  title={Obstacles to high-dimensional particle filtering},
  author={Snyder, Chris and Bengtsson, Thomas and Bickel, Peter and Anderson, Jeff},
  journal={Monthly Weather Review},
  volume={136},
  number={12},
  pages={4629--4640},
  year={2008},
  publisher={American Meteorological Society}
}

@inproceedings{Djuric2013HighDimPF,
  title={Particle filtering for high-dimensional systems},
  author={Djuri{\'c}, Petar M and Bugallo, M{\'o}nica F},
  booktitle={2013 5th IEEE International Workshop on Computational Advances in Multi-Sensor Adaptive Processing (CAMSAP)},
  pages={352--355},
  year={2013},
  organization={IEEE}
}

@article{Chopin2004CLTforSeqMC,
author = {Nicolas Chopin},
title = {{Central limit theorem for sequential {M}onte {C}arlo methods and its application to {B}ayesian inference}},
volume = {32},
journal = {The Annals of Statistics},
number = {6},
publisher = {Institute of Mathematical Statistics},
pages = {2385 -- 2411},
keywords = {{M}arkov chain {M}onte {C}arlo, particle filter, recursive {M}onte {C}arlo filter, resample-move algorithms, residual resampling, state-space model},
year = {2004},
doi = {10.1214/009053604000000698},
URL = {https://doi.org/10.1214/009053604000000698}
}

@article{Bayer2019Implied,
  title={Implied stopping rules for American basket options from {M}arkovian projection},
  author={Bayer, Christian and H{\"a}pp{\"o}l{\"a}, Juho and Tempone, Ra{\'u}l},
  journal={Quantitative Finance},
  volume={19},
  number={3},
  pages={371--390},
  year={2019},
  publisher={Taylor \& Francis}
}

@article{Bentata2009Mimicking,
  title={Mimicking the marginal distributions of a semimartingale},
  author={Bentata, Amel and Cont, Rama},
  journal={arXiv preprint arXiv:0910.3992},
  year={2009}
}

@incollection{Stratonovich1965Conditional,
  title={Conditional {M}arkov processes},
  author={Stratonovich, Ruslan Leont’evich},
  booktitle={Non-linear transformations of stochastic processes},
  pages={427--453},
  year={1965},
  publisher={Elsevier}
}

@article{Kushner1967Dynamical,
  title={Dynamical equations for optimal nonlinear filtering},
  author={Kushner, Harold J},
  journal={Journal of Differential Equations},
  volume={3},
  number={2},
  pages={179--190},
  year={1967},
  publisher={Elsevier}
}

@article{Zakai1969Optimal,
  title={On the optimal filtering of diffusion processes},
  author={Zakai, Moshe},
  journal={Zeitschrift f{\"u}r Wahrscheinlichkeitstheorie und verwandte Gebiete},
  volume={11},
  number={3},
  pages={230--243},
  year={1969},
  publisher={Springer}
}

@article{Lototsky1997SpectralApproach,
  title={Nonlinear filtering revisited: a spectral approach},
  author={Lototsky, Sergey and Mikulevicius, Remigijus and Rozovskii, Boris L},
  journal={SIAM Journal on Control and Optimization},
  volume={35},
  number={2},
  pages={435--461},
  year={1997},
  publisher={SIAM}
}

@phdthesis{Cai1995adaptive,
  title={An adaptive local grid refinement method for nonlinear filtering},
  author={Cai, Zhiqiang and Le Gland, Francois and Zhang, Huilong},
  year={1995},
  school={INRIA}
}

@misc{piterbarg2006markovian,
  title={{M}arkovian projection method for volatility calibration},
  author={Piterbarg, Vladimir},
  year={2006},
  publisher={SSRN}
}

@article{murray2002mathematical,
  title={Mathematical biology: I. An introduction. Interdisciplinary applied mathematics},
  author={Murray, James D},
  journal={Mathematical Biology, Springer},
  volume={17},
  year={2002}
}

@article{crisan2002survey,
  title={A survey of convergence results on particle filtering methods for practitioners},
  author={Crisan, Dan and Doucet, Arnaud},
  journal={IEEE Transactions on signal processing},
  volume={50},
  number={3},
  pages={736--746},
  year={2002},
  publisher={IEEE}
}

@article{leluc2025speeding,
  title={Speeding up {M}onte {C}arlo integration: Control neighbors for optimal convergence},
  author={Leluc, R{\'e}mi and Portier, Fran{\c{c}}ois and Segers, Johan and Zhuman, Aigerim},
  journal={Bernoulli},
  volume={31},
  number={2},
  pages={1160--1180},
  year={2025},
  publisher={Bernoulli Society for Mathematical Statistics and Probability}
}

@article{gerber2015sequential,
  title={Sequential quasi {M}onte {C}arlo},
  author={Gerber, Mathieu and Chopin, Nicolas},
  journal={Journal of the Royal Statistical Society Series B: Statistical Methodology},
  volume={77},
  number={3},
  pages={509--579},
  year={2015},
  publisher={Oxford University Press}
}

@phdthesis{ben2020hierarchical,
  title={Hierarchical approximation methods for option pricing and stochastic reaction networks},
  author={Ben Hammouda, Chiheb},
  year={2020},
  school={King Abdullah University of Science and Technology}
}

@phdthesis{munker2024generic,
  title={Generic importance sampling via stochastic optimal control and dimensionality reduction for stochastic reaction networks},
  author={M{\"u}nker, Sophia Franziska},
  year={2024},
  school={RWTH Aachen University}
}

@article{goutsias2013markovian,
  title={Markovian dynamics on complex reaction networks},
  author={Goutsias, John and Jenkinson, Garrett},
  journal={Physics reports},
  volume={529},
  number={2},
  pages={199--264},
  year={2013},
  publisher={Elsevier}
}

@book{brauer2012mathematical,
  title={Mathematical models in population biology and epidemiology},
  author={Brauer, Fred and Castillo-Chavez, Carlos and Castillo-Chavez, Carlos},
  volume={2},
  year={2012},
  publisher={Springer}
}

@book{anderson2015stochastic,
  title={Stochastic analysis of biochemical systems},
  author={Anderson, David F and Kurtz, Thomas G},
  volume={674},
  year={2015},
  publisher={Springer}
}

@article{srivastava2002stochastic,
  title={Stochastic vs. deterministic modeling of intracellular viral kinetics},
  author={Srivastava, Ranjan and You, Lingchong and Summers, Jesse and Yin, John},
  journal={Journal of theoretical biology},
  volume={218},
  number={3},
  pages={309--321},
  year={2002},
  publisher={Elsevier}
}

@article{hensel2009stochastic,
  title={Stochastic kinetic modeling of vesicular stomatitis virus intracellular growth},
  author={Hensel, Sebastian C and Rawlings, James B and Yin, John},
  journal={Bulletin of mathematical biology},
  volume={71},
  pages={1671--1692},
  year={2009},
  publisher={Springer}
}

@article{ocal2023model,
  title={Model reduction for the chemical master equation: an information-theoretic approach},
  author={{\"O}cal, Kaan and Sanguinetti, Guido and Grima, Ramon},
  journal={The Journal of Chemical Physics},
  volume={158},
  number={11},
  year={2023},
  publisher={AIP Publishing}
}

@article{hijon2010mori,
  title={{M}ori-{Z}wanzig formalism as a practical computational tool},
  author={Hij{\'o}n, Carmen and Espa{\~n}ol, Pep and Vanden-Eijnden, Eric and Delgado-Buscalioni, Rafael},
  journal={Faraday discussions},
  volume={144},
  pages={301--322},
  year={2010},
  publisher={Royal Society of Chemistry}
}

@article{zhu2018estimation,
  title={On the estimation of the {M}ori-{Z}wanzig memory integral},
  author={Zhu, Yuanran and Dominy, Jason M and Venturi, Daniele},
  journal={Journal of Mathematical Physics},
  volume={59},
  number={10},
  year={2018},
  publisher={AIP Publishing}
}
        
\end{document}